\documentclass[a4paper,12pt]{amsart}
\usepackage{amsfonts}
\usepackage{amsthm}
\usepackage{amssymb}
\usepackage{amsmath}
\usepackage{enumerate}
\usepackage{url}

\usepackage{a4wide}

\begin{document}
\title{Decoupling theorems for the Duffin-Schaeffer problem}
\author{Christoph Aistleitner}

\begin{abstract}
The Duffin-Schaeffer conjecture is a central open problem in metric number theory. Let $\psi~\mathbb{N} \mapsto \mathbb{R}$ be a non-negative function, and set $\mathcal{E}_n :=\bigcup \left( \frac{a - \psi(n)}{n},\frac{a+\psi(n)}{n} \right)$, where the union is taken over all $a \in \{1, \dots, n\}$ which are co-prime to $n$. Then the conjecture asserts that almost all $x \in [0,1]$ are contained in infinitely many sets $\mathcal{E}_n$,  provided that the series of the measures of $\mathcal{E}_n$ is divergent. At the core of the conjecture is the problem of controlling the measure of the pairwise overlaps $\mathcal{E}_m \cap \mathcal{E}_n$, in dependence on $m, n, \psi(m)$ and $\psi(n)$. In the present paper we prove upper bounds for the measures of these overlaps, which show that globally the degree of dependence in the set system $(\mathcal{E}_n)_{n \geq 1}$ is significantly smaller than supposed. As applications, we obtain significantly improved ``extra divergence'' and ``slow divergence'' variants of the Duffin-Schaeffer conjecture.
\end{abstract}
\maketitle

\newtheorem{prop}{Proposition}
\newtheorem{claim}{Claim}
\newtheorem{lemma}{Lemma}
\newtheorem{thm}{Theorem}
\newtheorem{defn}{Definition}
\newtheorem{conj}{Conjecture}

\theoremstyle{definition}
\newtheorem{exmp}{Example}

\theoremstyle{remark}
\newtheorem{rmk}{Remark}

\section{Introduction and statement of results}

\emph{Opening remark: The results obtained in this manuscript have been superseded by those of Koukoulopoulos and Maynard \cite{km}, who gave a proof of the full Duffin--Schaeffer conjecture. This manuscript is placed on arxiv for reference purpose, but will not be published in a mathematical journal. It is left in the (unpolished) form which it had when I learned about the existence of Koukoulopoulos' and Maynard's proof, except for the addition of this opening remark and a closing remark at the end of the introduction.}\\

Let $\psi: \mathbb{N} \rightarrow \mathbb{R}$ be a non-negative function. For every non-negative integer $n$ define a set $\mathcal{E}_n \subset \mathbb{R} / \mathbb{Z}$ by 
\begin{equation} \label{edef}
\mathcal{E}_n := \bigcup_{\substack{1 \leq a \leq n,\\ (a,n)=1}} \left( \frac{a - \psi(n)}{n},\frac{a+\psi(n)}{n} \right).
\end{equation}
The Lebesgue measure of $\mathcal{E}_n$ is at most $2 \psi(n) \varphi(n)/n$, where $\varphi$ denotes the Euler totient function. Thus, writing $W(\psi)$ for the set of those $x \in [0,1]$ which are contained in infinitely many sets $\mathcal{E}_n$, it follows directly from the first Borel--Cantelli lemma that $\lambda(W(\psi))=0$ whenever
\begin{equation} \label{sum}
\sum_{n=1}^\infty \frac{\psi(n) \varphi(n)}{n} < \infty.
\end{equation}
Here $\lambda$ denotes the Lebesgue measure. The corresponding divergence statement, which asserts that $\lambda(W(\psi))=1$ whenever the series in \eqref{sum} is divergent, is known as the Duffin--Schaeffer conjecture \cite{ds} and is one of the most important open problems in metric number theory. It remains unsolved since 1941.\\

Historically, the Duffin--Schaeffer conjecture is an attempt to remove the monotonicty requirement Tfrom Khintchine's theorem in metric Diophantine approximation. The Duffin--Schaeffer conjecture is known to be true under some additional arithmetic conditions or regularity conditions on the function $\psi$. A basic result, known as the Duffin--Schaeffer \emph{theorem}, asserts that the conclusion of the conjecture holds whenever the additional assumption
\begin{equation} \label{dst}
\limsup_{N \to \infty} \frac{\sum_{n=1}^N \frac{\psi(n) \varphi(n)}{n}} {\sum_{n=1}^N \psi(n)} > 0
\end{equation}
is satisfied. Further results under assumptions on the arithmetic properties of the support of $\psi$ were obtained by Harman \cite{har2} and by Strauch, in a series of papers starting with \cite{strauch}. One of the most striking results is the Erd\H os--Vaaler theorem \cite{vaa}, which states that the conclusion of the conjecture holds under the assumption that $\psi(n) \leq 1/n$ for all $n$ (without imposing any further arithmetic conditions). This has been slightly improved later by Vilchinskii \cite{vil}. For more basic information on the problem and an exposition of classical results, see Harman's \cite{harman} monograph on Metric Number Theory.\\

Observe that the second Borel--Cantelli lemma cannot be used to deduce the conclusion of the conjecture from the divergence of the series \eqref{sum}, since the sets $(\mathcal{E}_n)_{n \geq 1}$ are not independent. Indeed, it is well-known that by the Erd\H os--R\'enyi version of the Borel--Cantelli lemma (see Lemma \ref{lemmabc} below), together with Gallagher's zero-one law \cite{gall}, it would be sufficient to establish pairwise ``quasi-independence on average'' of these sets. However, the best that we have is the following estimate of Pollington and Vaughan \cite{pv}.\footnote{Concerning the notation: Throughout this paper we will understand $\log x$ as $\max(1,\log x)$, so that all appearing logarithms and iterated logarithms are positive and well-defined. We use Vinogradov notation $\ll$ and $\gg$, where any dependence of the implied constants will be indicated. We will write $\eta$ and $\varepsilon$ for fixed constants which can be chosen arbitrarily small.}
\begin{lemma} \label{lemmapv}
Assume that $m < n$. Define \begin{equation} \label{ddef}
D(m,n) = \frac{\max(n \psi(m),m \psi(n))}{(m,n)}.
\end{equation}
When $D < 1$, then $\mathcal{E}_m \cap \mathcal{E}_n = \emptyset$. When $D \geq 1$, then  
\begin{equation} \label{lemmaint}
\lambda(\mathcal{E}_m \cap \mathcal{E}_n) \leq P(m,n) \lambda(\mathcal{E}_m) \lambda(\mathcal{E}_n),
\end{equation}
where
\begin{equation} \label{pdef}
P(m,n) \ll \prod_{\substack{p | \frac{mn}{(m,n)^2},\\p > D(m,n)}} \left(1 - \frac{1}{p} \right)^{-1}.
\end{equation}
\end{lemma}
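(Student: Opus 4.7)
The plan is to reduce both assertions to a problem of counting ``close'' reduced fractions with denominators $m$ and $n$, and then to estimate this count by a sieve. Write $d=(m,n)$, $m=dm'$, $n=dn'$ with $(m',n')=1$. Suppose $x \in \mathcal{E}_m \cap \mathcal{E}_n$, witnessed by $a,b$ with $(a,m)=(b,n)=1$. Then $|a/m-b/n|<\psi(m)/m+\psi(n)/n$, equivalently
$$|an-bm|< n\psi(m)+m\psi(n) \leq 2dD(m,n).$$
Since $an-bm\in d\mathbb{Z}$, this integer has absolute value strictly less than $2d$ whenever $D<1$, forcing $an=bm$. Using $(m',n')=1$ together with $(a,m)=(b,n)=1$, I would check by a short case analysis that this forces $m'=n'=1$ (i.e.\ $m=n$), a case excluded by the assumption $m<n$; this yields the first assertion.

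For the second assertion I would estimate $\lambda(\mathcal{E}_m\cap\mathcal{E}_n)$ by summing the measures of the individual interval intersections. Assume without loss of generality $\psi(m)/m \leq \psi(n)/n$, so that each nonempty intersection has length at most $2\psi(m)/m$. Hence
$$\lambda(\mathcal{E}_m\cap\mathcal{E}_n) \leq \frac{2\psi(m)}{m}\cdot \#\mathcal{N},$$
where $\mathcal{N}$ is the set of pairs $(a,b)$ with $1\leq a\leq m$, $1\leq b\leq n$, $(a,m)=(b,n)=1$ and $|an-bm| \leq n\psi(m)+m\psi(n)$. Comparing this with the target bound $P(m,n)\lambda(\mathcal{E}_m)\lambda(\mathcal{E}_n)=4P(m,n)\psi(m)\psi(n)\varphi(m)\varphi(n)/(mn)$ reduces matters to showing
$$\#\mathcal{N} \ll P(m,n)\,\frac{\varphi(m)\varphi(n)}{mn}\cdot dD(m,n).$$

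To bound $\#\mathcal{N}$ I divide through by $d$: the condition becomes $|an'-bm'|\leq R$ with $R\leq 2D$. For each fixed integer $r$ with $|r|\leq R$, the solutions of $an'-bm'=r$ form a single arithmetic progression of common difference $(m',n')$ in $a,b$, hence contribute exactly $d$ pairs $(a,b) = (a_0+tm',b_0+tn')$, $t=0,\dots,d-1$, in the rectangle. The coprimality $(a,m)=1$ reduces (since $(a_0+tm',m')=(a_0,m')$) to $(a_0,m')=1$ together with $(a_0+tm',d)=1$, and analogously for $(b,n)=1$. I would therefore rewrite $\#\mathcal{N}$ as a sum over $r$ of a sifted count, with the sieve variables being primes dividing either $d$ (acting on $t$) or dividing $m'n'$ (acting on $r$ via the solvability of $a_0 n'-b_0 m' = r$ with $(a_0,m')=(b_0,n')=1$).

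The final step is a standard sieve computation: for primes $p$ dividing $mn/d^2=m'n'$ with $p > D$, each residue class is attained at most once among the admissible values of $r$, so the sieve factor $(1-1/p)$ is essentially preserved; for primes $p \leq D$ we cannot do better than the trivial bound, which replaces the ideal factor $(1-1/p)$ by $1$ and thereby produces the extra $(1-1/p)^{-1}$ in the definition of $P(m,n)$. Primes dividing $d$ behave analogously, but their contribution is absorbed into $\varphi(m)\varphi(n)/(mn)$. I expect the main technical obstacle to be this bookkeeping: carefully separating the three prime classes (dividing $d$, $m'$ only, $n'$ only), handling the endpoint $p=D$, and turning the crude inclusion–exclusion into the clean Eulerian product appearing in $P(m,n)$. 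Once this is done, combining the estimate for $\#\mathcal{N}$ with the interval–overlap bound gives exactly the claimed inequality.
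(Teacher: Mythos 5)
The paper does not contain a proof of Lemma \ref{lemmapv}; the lemma is quoted from Pollington and Vaughan \cite{pv}, so there is no in-paper argument to compare yours against. Your plan --- reduce to counting pairs of reduced fractions $a/m$, $b/n$ with $|an-bm|<n\psi(m)+m\psi(n)$, and then sieve over the values $r=an'-bm'$ --- is indeed the route taken in \cite{pv}. However, there are two problems with the execution.

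The disjointness argument has a genuine gap. From $|an-bm|<2dD<2d$ and $d\mid(an-bm)$ you get $an-bm\in\{-d,0,d\}$, not only $an-bm=0$. Your coprimality case analysis rules out $an=bm$, but says nothing about $an'-bm'=\pm1$, which is perfectly consistent with $(a,m)=(b,n)=(m',n')=1$ and does produce overlaps. For instance, $m=2$, $n=3$, $\psi(2)=0.3$, $\psi(3)=0.45$ gives $D=0.9<1$ yet $\mathcal{E}_2\cap\mathcal{E}_3\neq\emptyset$. What your computation actually shows is that the overlap is empty once $n\psi(m)+m\psi(n)\le(m,n)$ (hence certainly once $D\le 1/2$), which is the form Pollington and Vaughan prove; the step ``forcing $an=bm$'' does not follow from the inequalities you have, and the threshold ``$D<1$'' cannot be obtained this way.

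The bound \eqref{lemmaint} is only outlined. You correctly reduce to bounding $\#\mathcal{N}$, correctly observe that each fixed $r$ contributes at most $d$ lattice points in the box, and correctly identify that a sieve over $r$ is needed in which the primes $p\le D$ produce the factors $(1-1/p)^{-1}$ appearing in \eqref{pdef}. But that sieve --- separating primes dividing $d$, $m'$, $n'$; controlling the incomplete Euler product; turning inclusion--exclusion into the stated product --- is precisely the content of the lemma, and you explicitly defer it as ``bookkeeping.'' As a strategy it matches Pollington--Vaughan; as a proof it stops before the part that carries the weight.
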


Two things are crucial here. On the one hand, the factor $P(m,n)$ in the lemma is unbounded, and can be of order as large as $\log \log n$. On the other hand, this can only happen when $D(m,n)$ is in a ``critical range'' from $1$ to $(\log n)^\eta$ for some positive $\eta$, since it can be shown that $P(m,n) \ll_\eta 1$ whenever $D \gg (\log n)^\eta$. It should be noted that the problem with estimating the measure of the overlaps $\mathcal{E}_m \cap \mathcal{E}_n$ is \emph{not} that we are missing good estimates for these measures; on the very contrary, morally Lemma \ref{lemmapv} can be treated as an equality. Thus for some configurations of $m,n, \psi(m), \psi(n)$ the measure of the overlap $\mathcal{E}_m \cap \mathcal{E}_n$ \emph{really} is too large. Quoting from \cite{extra}: 
\begin{quote}
This is a real problem, not just a deficiency in our knowledge.
\end{quote}
In many partial results, the factor $P(m,n)$ is controlled by imposing arithmetic conditions upon the support of $\psi$. The Duffin--Schaeffer theorem might be seen in this light, since $P(m,n)$ can be estimated in terms of the Euler totient function of $m$ and $n$, and can thus be controlled using \eqref{dst}. More delicately (and more recently), in \cite{a2} and \cite{extra} an ``extra divergence'' assumption was used to shift $\psi$ such that the critical range for $D(m,n)$ can be avoided. In these papers it was tried to control $D(m,n)$ and $P(m,n)$ on an individual basis, that is, for specific pairs of $m$ and $n$. In the present paper we take a very different, ``global'' perspective, which is more in the spirit of \cite{vaa}. We show that even if for some configurations of $m,n,\psi(m)$ and $\psi(n)$ the value of $D(m,n)$ may fall into the critical range and the factor $P(m,n)$ may be too large, under certain circumstances this can only happen for a number of pairs of indicies $m$ and $n$ which is negligible from a global perspective. This approach is in accordance with the following sentence, which is the direct continuation in \cite{extra} of the quotation above:
\begin{quote}
Our hope would be that the values of $m$ and $n$ concerned do not make the major contribution to
$$
\sum_{1 \leq m,n \leq N} \lambda ( \mathcal{E}_m \cap \mathcal{E}_n ). 
$$
\end{quote}

The structural results of this paper are of a somewhat technical natural; they are formulated as Lemmas in the following section. Here in the introduction we will only illustrate the quantitative improvements coming from these lemmas, to show how they imply that there is much less structural dependence in the Duffin--Schaeffer problem than what usually was assumed so far. Subsequently, we present two applications, concerning improvements of recent work on ``extra divergence'' and ``slow divergence'' versions of the Duffin--Schaeffer problem. We finish the introduction with a short survey on certain sums involving greatest common divisors (GCD sums), which play a key role in our proofs.\\

Throughout the following statements, assume that $m < n$. \\
\begin{itemize}
\item It was know that, as a consequence of the Erd\H os--Vaaler theorem and Lemma \ref{lemmapv}, we have $\lambda(\mathcal{E}_m \cap \mathcal{E}_n) \ll \lambda(\mathcal{E}_m) \lambda(\mathcal{E}_n)$, provided that $m^4 \leq n$. See for example \cite{extra}. Our results show that actually it is sufficient to assume that $m (\log m)^\eta \ll n$, for some $\eta>0$ --- not for individual pairs of indices $m$ and $n$, but globally in the sense that the number of exceptional pairs of indices $m$ and $n$ is negligible. This allows us to localize the problem with respect to the relative size of $m$ and $n$.\\
\item Similarly, we show that $\lambda(\mathcal{E}_m \cap \mathcal{E}_n) \ll \lambda(\mathcal{E}_m) \lambda(\mathcal{E}_n)$ whenever either $\psi(m) (\log n)^\eta \leq \psi(n)$ or $\psi(m) \geq \psi(n) (\log n)^\eta$, for some $\eta > 0$ ---  again in the sense that the number of exceptional pairs of indices $m$ and $n$ is negligible. This allows us to localize the problem with respect to the relative position of $\psi(m)$ and $\psi(n)$.\\
\item It was known that $P(m,n)\ll (\log \log n)$ for all $m$ and $n$. We show that actually we always have $P(m,n) \ll (\log \log \log n)$, except for a number of pairs $m$ and $n$ which is negligible. 
\end{itemize}

\subsection{Extra divergence}

In \cite{extra1}, Haynes, Pollington and Velani initiated a program to establish the Duffin--Schaeffer condition without assuming any regularity properties or arithmetic properties of $\psi$, but instead assuming a slightly stronger divergence condition. In \cite{extra1} they proved that there is a constant $c$ such that $\lambda(W(\psi))=1$, provided that
$$
\sum_{n=1}^\infty \frac{\psi(n) \varphi(n)}{n ~ e^{\left( \frac{c \log n}{\log \log n} \right)}} = \infty
$$
Beresnevich, Harman, Haynes and Velani \cite{extra} developed a beautiful averaging argument to show that it is sufficient to assume 
$$
\sum_{n=1}^\infty \frac{\psi(n) \varphi(n)}{n(\log n)^{\varepsilon \log \log \log n}} = \infty
$$
for some $\varepsilon > 0$. Using a more subtle version of their argument, in \cite{a2} the extra divergence requirement was reduced to
\begin{equation} \label{extra_th}
\sum_{n=1}^\infty \frac{\psi(n) \varphi(n)}{n(\log n)^{\varepsilon}} = \infty
\end{equation}
for some $\varepsilon > 0$.\\

In the present paper we obtain the following ``extra divergence'' result.
\begin{thm} \label{th1}
Let $\psi: \mathbb{N} \rightarrow [0,\infty)$ be a function. Assume that there is a constant $\varepsilon >0$ such that
\begin{equation} \label{div}
\sum_{n=1}^\infty \frac{\psi(n) \varphi(n)}{n(\log \log n)^\varepsilon} = \infty.
\end{equation}
Then we have $\lambda(W(\psi))=1$.
\end{thm}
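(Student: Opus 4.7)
The plan is to combine the standard framework for the Duffin--Schaeffer conjecture with the three structural improvements announced in the introduction. By Gallagher's zero-one law it suffices to show $\lambda(W(\psi))>0$, and for this the Erd\H os--R\'enyi form of the Borel--Cantelli lemma (Lemma \ref{lemmabc}) reduces matters to establishing a quasi-independence estimate of the shape
$$
\sum_{m,n \leq N} \lambda(\mathcal{E}_m \cap \mathcal{E}_n) \ll \Bigl( \sum_{n=1}^N \lambda(\mathcal{E}_n) \Bigr)^2
$$
along a sequence $N_k \to \infty$ on which the right-hand side is large. Applying Lemma \ref{lemmapv} in each pair, the task becomes to control $\sum_{m,n \leq N} P(m,n)\lambda(\mathcal{E}_m)\lambda(\mathcal{E}_n)$ up to an acceptable loss, with $P(m,n)$ defined in \eqref{pdef}.

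First I would dyadically decompose both the range of indices $n$ and the range of values $\psi(n)$, so that within a block one has $N \leq n < 2N$ and $\psi(n) \asymp \Psi$. On each such block the three structural bullet points from the introduction can be applied in succession: pairs $(m,n)$ with $n/m \leq (\log m)^\eta$, pairs with $\psi(m)/\psi(n)$ or $\psi(n)/\psi(m)$ in the range $[1,(\log n)^\eta]$, and pairs with $P(m,n) \gg \log\log\log n$, should each contribute only a negligible portion of the double sum. On the complement one gains either essentially full independence or at worst a $\log\log\log n$ loss, which replaces the usual trivial bound $P(m,n) \ll \log\log n$ of Pollington--Vaughan.

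Next I would implement an averaging/shifting argument in the spirit of \cite{extra} and \cite{a2}: within each dyadic block one passes to a carefully chosen subsequence of indices $n$ on which the arithmetic conditions feeding into $P(m,n)$ are favorable. The cost of this reduction is precisely what drives the extra divergence hypothesis. Using the Pollington--Vaughan bound as a black box leads to the $(\log n)^\varepsilon$ loss of \cite{a2}; substituting the improved structural inputs above, the bad configurations inflate $P(m,n)$ only by $\log\log\log n$ on the bulk of pairs rather than by $\log\log n$, and the shifting can therefore be calibrated to cost only $(\log\log n)^\varepsilon$. This matches \eqref{div}, and the chain Erd\H os--R\'enyi $\Rightarrow$ positive measure $\Rightarrow$ Gallagher $\Rightarrow$ full measure delivers $\lambda(W(\psi))=1$.

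The main obstacle, as I see it, is the bookkeeping: one must verify that the three families of exceptional pairs jointly contribute an amount that is genuinely negligible against $\bigl( \sum_{n \leq N} \lambda(\mathcal{E}_n) \bigr)^2$, uniformly in the dyadic parameters $N$ and $\Psi$ so that the estimates survive summation over dyadic blocks. A second delicate point is to align the shifting step with the new $\log\log\log n$ bound so that $(\log\log n)^\varepsilon$ emerges as the true cost rather than some residual $(\log n)^{o(1)}$ factor; this calibration is where the quantitative forms of the structural lemmas will need to be used most carefully, and is presumably why the bulk of the paper is devoted to proving them.
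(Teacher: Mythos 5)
Your proposal follows essentially the same strategy as the paper: decompose dyadically in both $n$ and $\psi(n)$ (the sets $S_k^r$), use quasi-independence when either the indices or the $\psi$-values differ by a power of $\log$ (Lemma \ref{lemmak1k2}), bound the number of remaining bad pairs via a GCD/prime-factor lemma (Lemma \ref{lemma_diag}), and feed the improved prime-factor bound $\omega(mn/(m,n)^2) \ll \log\log n$ into the averaging argument of \cite{a2,extra} to reduce the shift cost to $(\log\log n)^\varepsilon$, then close with Erd\H os--R\'enyi and Gallagher. This is the paper's path.

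One small imprecision worth flagging: you phrase the third structural input as ``$P(m,n) \ll \log\log\log n$ on the bulk of pairs,'' and treat this as the quantity fed directly into the averaging. The paper's actual mechanism is slightly different. Lemma \ref{lemma_diag} bounds the number of pairs that simultaneously have a large GCD and have $\omega(mn/(m,n)^2) \gg \log k_1 \asymp \log\log n$. For all remaining pairs in the near-diagonal blocks, the number of distinct prime factors is $\ll \log\log n$, and it is this bound that lets one discard primes above $e^{S(h)}$ in the Pollington--Vaughan product (via Mertens, their total contribution is $O(1)$) and hence run the averaging over a shift range of length $S(h) \asymp \log\log\log n$. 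The ``$P(m,n) \ll \log\log\log n$'' statement from the introduction is a consequence of this, not the direct input. The distinction matters because the averaging argument needs control of the truncated products $P_s(m,n)$ uniformly over $s$, not just of $P(m,n)$ itself, and it is the short prime range that makes $\sum_s P_s(m,n) \ll S(h)$ come out. Apart from this, your plan is faithful to the paper's.
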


Reducing the ``extra divergence'' factor to a power of $\log \log n$ is psychologically significant, since this is the scale where the factor $\varphi(n)/n$ becomes visible in the extra divergence statement. Indeed, since $1 \geq \varphi(n) /n \gg (\log \log n)^{-1}$ for all $n$, rather than assuming \eqref{extra_th} for some $\varepsilon>0$ we could also assume that
$$
\sum_{n=1}^\infty \frac{\psi(n)}{(\log n)^{\hat{\varepsilon}}} = \infty
$$
for some $\hat{\varepsilon} > 0$. Theorem \ref{th1} does not have such a simple equivalent formulation without the Euler totient function.

\subsection{Slow divergence}

In \cite{aistslow} the author proved the following ``slow divergence'' variant of the Duffin--Schaeffer conjecture: The conclusion of the Duffin--Schaeffer conjecture holds, provided that 
\begin{equation} \label{series_div}
\sum_{n=1}^\infty \frac{\psi(n) \varphi(n)}{n} = \infty
\end{equation}
and 
\begin{equation} \label{sum_h}
\sum_{2^{2^h} < n \leq 2^{2^{h+1}}} \psi(n) \ll \frac{1}{h}.
\end{equation}
The purpose of this result was to show that in any potential counterexample to the Duffin--Schaeffer conjecture, the mass of $\psi$ must be unevenly distributed over the positive integers. Indeed, if $\psi$ is ``regular'' (using the word in a completely informal sense), then we should expect the sum on the left-hand side of \eqref{sum_h} to be somewhere around $1/(h \log h)$, since this is the range where the convergence/divergence of the series \eqref{series_div} is decided. As a consequence of our decoupling results, we obtain the following drastically improved ``slow divergence'' theorem.

\begin{thm} \label{th2}
Let $\psi: \mathbb{N} \rightarrow [0,\infty)$ be a function. Assume that the divergence requirement \eqref{series_div} holds. Assume additionally that there exists a constant $\eta > 0$ such that
$$
\sum_{2^h < n \leq 2^h h^\eta} \frac{\psi(n) \varphi(n)}{n} \ll \frac{1}{\log \log h}.
$$
Then $\lambda(W(\psi))=1$. 
\end{thm}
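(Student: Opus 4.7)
The plan is to combine Gallagher's zero-one law with the Erd\H{o}s--R\'enyi version of the Borel--Cantelli lemma (Lemma~\ref{lemmabc} below): it suffices to establish quasi-independence on average, i.e.\ that along a subsequence $N_j \to \infty$,
\[
\sum_{m,n \leq N_j} \lambda(\mathcal{E}_m \cap \mathcal{E}_n) \;\ll\; \Bigl( \sum_{n \leq N_j} \lambda(\mathcal{E}_n) \Bigr)^{\!2}.
\]
Under the standard WLOG reduction $\psi(n) \leq 1/2$, the divergence assumption \eqref{series_div} gives $\lambda(\mathcal{E}_n) = 2\psi(n)\varphi(n)/n$ and hence $\sum_n \lambda(\mathcal{E}_n) \to \infty$; in particular any contribution to the left-hand side of size $O\bigl(\sum_n \lambda(\mathcal{E}_n)\bigr)$ is negligible on the right.

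Let $\eta$ be the constant from the hypothesis and fix $\eta' < \eta$. I would split the pairs $m < n$ into the \emph{far regime} $n > m(\log m)^{\eta'}$ and the \emph{close regime} $m < n \leq m(\log m)^{\eta'}$. For the far regime, the first decoupling statement highlighted in the introduction (informally, $\lambda(\mathcal{E}_m\cap\mathcal{E}_n) \ll \lambda(\mathcal{E}_m)\lambda(\mathcal{E}_n)$ up to a set of pairs whose contribution to the double sum is negligible) directly yields $\sum_{\mathrm{far}} \lambda(\mathcal{E}_m\cap\mathcal{E}_n) \ll (\sum_n \lambda(\mathcal{E}_n))^2$.

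For the close regime I would use Lemma~\ref{lemmapv} together with the third decoupling statement of the introduction, which gives $P(m,n) \ll \log\log\log n$ off a negligible set of pairs. Bounding $\lambda(\mathcal{E}_n) \leq 2\psi(n)\varphi(n)/n$, this gives
\[
\sum_{\mathrm{close}} \lambda(\mathcal{E}_m\cap\mathcal{E}_n) \;\ll\; \sum_m (\log\log\log m)\,\lambda(\mathcal{E}_m) \!\!\!\sum_{m < n \leq m(\log m)^{\eta'}}\!\! \frac{\psi(n)\varphi(n)}{n}.
\]
For $m$ in the dyadic block $(2^{h-1},2^h]$ and $\eta'<\eta$, the inner range is contained in $(2^{h-1},\,2^{h-1}(h-1)^\eta]$ for $h$ large, so the slow-divergence hypothesis bounds the inner sum by $\ll 1/\log\log h \ll 1/\log\log\log m$. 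The two triple logs cancel, leaving $\sum_{\mathrm{close}} \lambda(\mathcal{E}_m\cap\mathcal{E}_n) \ll \sum_m \lambda(\mathcal{E}_m) = o\bigl((\sum_n \lambda(\mathcal{E}_n))^2\bigr)$.

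The main delicate point, and the place where I expect to spend the most care, is controlling the ``negligible'' exceptional pairs coming from the two decoupling statements -- both those lying in the far regime and those in the close regime -- and verifying that the quantitative form of ``negligible'' provided by the structural lemmas is indeed $o\bigl((\sum_n\lambda(\mathcal{E}_n))^2\bigr)$ after being weighted by the trivial bound $P(m,n) \ll \log\log n$. If the exponent $\eta'$ demanded by the decoupling lemmas is larger than the $\eta$ offered by the hypothesis, I would simply shrink $\eta$ in the statement of Theorem~\ref{th2}, which does not weaken the conclusion.
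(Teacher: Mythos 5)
Your outline is correct, and it is exactly what the paper indicates (the paper itself only \emph{sketches} a proof of Theorems~\ref{th2}--\ref{th3}, referring to the strategy of \cite{aistslow} combined with the decoupling lemmas, so there is no detailed argument in the paper to compare against line by line). The structure Borel--Cantelli + Gallagher, and the identification of the slow-divergence hypothesis as the replacement for the shift-averaging step of the extra-divergence proof, is the right strategy.

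One point you should make explicit, because it is hidden in your appeal to the informal ``third decoupling statement'': the actual lemmas require a two-dimensional dichotomy, in the dyadic scales $k$ of $m,n$ \emph{and} the dyadic scales $r$ of $\psi(m),\psi(n)$. Your ``close regime'' $m<n\le m(\log m)^{\eta'}$ (equivalently $|k_1-k_2|$ small) still contains pairs with $|r_1-r_2|$ large, and for those Lemma~\ref{lemma_diag} does not apply. What saves you is that for such pairs Lemma~\ref{lemmak1k2} already gives the block-level quasi-independence bound $\sum\lambda(\mathcal E_m\cap\mathcal E_n)\ll\bigl(\sum\lambda(\mathcal E_m)\bigr)\bigl(\sum\lambda(\mathcal E_n)\bigr)$ directly, without any reference to $P(m,n)$; only in the sub-regime where both $|k_1-k_2|$ and $|r_1-r_2|$ are $\lesssim b_3\log k_1$ do you need the $\omega$-bound of Lemma~\ref{lemma_diag} together with Mertens to get $P(m,n)\ll\log\log\log n$ off the exceptional set. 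So the correct decomposition of the double sum is: (a) $|k_1-k_2|$ or $|r_1-r_2|$ large, Lemma~\ref{lemmak1k2} gives $\ll(\sum\lambda)^2$; (b) both small and non-exceptional, $P\ll\log\log\log n$ and the slow-divergence hypothesis kill the triple log, giving $\ll\sum\lambda$; (c) the exceptional pairs of Lemma~\ref{lemma_diag}, whose cardinality $\ll2^{r_1}2^{r_2}k_1^{-b_6}$ against the trivial per-pair bound $\ll2^{-r_1}2^{-r_2}\log k_1$ gives a total of $O(1)$ after summing over blocks (this is where $b_6>5$ is used). Your final remark about shrinking $\eta$ is unnecessary and in fact points in the wrong direction (a smaller $\eta$ makes the hypothesis weaker, hence the theorem harder); what actually resolves the mismatch is that a range of relative width $k^{c}$ can be covered by $O(c/\eta)$ blocks of the form $(2^h,2^hh^\eta]$, so the hypothesis with any fixed $\eta>0$ already gives the needed bound up to a constant factor.
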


Actually, we can also include a restriction on the size of $\psi$. Then the theorem reads as follows. Note that Theorem \ref{th2} is a direct consequence of Theorem \ref{th3}.

\begin{thm} \label{th3}
Let $\psi: \mathbb{N} \rightarrow [0,\infty)$ be a function. Assume that the divergence requirement \eqref{series_div} holds. Assume additionally that there exists a constant $\eta > 0$ such that
$$
\sum_{\substack{2^h < n \leq 2^h h^\eta, \\ r^{-1} h^{-\eta} < \psi(n) \leq r^{-1}}} \frac{\psi(n) \varphi(n)}{n} \ll \frac{1}{\log \log h},
$$
uniformly in $r \geq 1$. Then $\lambda(W(\psi))=1$. 
\end{thm}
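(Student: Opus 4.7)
The plan is to verify the Erdős--Rényi version of the Borel--Cantelli lemma (Lemma~\ref{lemmabc}) in the form $\sum_{m,n\le N}\lambda(\mathcal{E}_m\cap\mathcal{E}_n)\ll S_N^{\,2}$ along a subsequence $N_k\to\infty$, where $S_N:=\sum_{n\le N}\lambda(\mathcal{E}_n)\asymp\sum_{n\le N}\psi(n)\varphi(n)/n$ is unbounded by hypothesis; Gallagher's zero--one law then promotes the resulting $\lambda(W(\psi))>0$ to $\lambda(W(\psi))=1$. I would set up cells adapted to the hypothesis: $I_h=(2^h,2^{h+1}]$, $J_h=(2^h,2^h h^\eta]$, and $B_{h,j}=\{n\in J_h:\psi(n)\in(h^{-\eta(j+1)},h^{-\eta j}]\}$. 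Dyadic choices $r=h^{\eta j}$ in the hypothesis then yield the uniform bound $\sum_{n\in B_{h,j}}\psi(n)\varphi(n)/n\ll 1/\log\log h$. Let $\eta_1<\eta$ (say $\eta_1=\eta/2$) be the exponent appearing in the first two decoupling bullets of the introduction, chosen small enough that the ``same-$\psi$-scale'' window $[\psi(n)(\log n)^{-\eta_1},\psi(n)(\log n)^{\eta_1}]$ fits inside $O(1)$ adjacent cells of the partition.

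I would then split $\sum_{m\le n\le N}\lambda(\mathcal{E}_m\cap\mathcal{E}_n)$ into three regimes, one per decoupling bullet. \emph{Far pairs}, satisfying $n\ge m(\log m)^{\eta_1}$, are handled by the first bullet and contribute $\ll S_N^{\,2}$. \emph{Close pairs with $\psi$-separated scales}, where $m,n$ lie in a common super-block $J_h$ but in $B_{h,j_1}$, $B_{h,j_2}$ with $|j_1-j_2|\ge 2$, satisfy $\psi(m)/\psi(n)$ or its reciprocal $\ge h^\eta\ge(\log n)^{\eta_1}$, so the second bullet applies and yields again $\ll S_N^{\,2}$. For the remaining \emph{close, $\psi$-matched pairs} I assign each pair the canonical index $h=\lfloor\log_2 m\rfloor$, so that $m\in I_h\cap B_{h,j}$ for some $j$ and $n\in J_h\cap B_{h,j'}$ with $|j'-j|=O(1)$. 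Combining the third bullet ($P(m,n)\ll\log\log\log n$ outside a negligible exceptional set) with the hypothesis gives
\[
\sum\lambda(\mathcal{E}_m\cap\mathcal{E}_n)\ll(\log\log\log N)\sum_h\Bigl(\sum_{n\in I_h}\lambda(\mathcal{E}_n)\Bigr)\max_{j'}\sum_{n\in B_{h,j'}}\lambda(\mathcal{E}_n)\ll(\log\log\log N)\,S_N,
\]
using $\max_{j'}\sum_{n\in B_{h,j'}}\lambda(\mathcal{E}_n)\ll 1/\log\log h$ and $\sum_h\sum_{n\in I_h}\lambda(\mathcal{E}_n)=S_N$. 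Passing to a subsequence along which $S_N$ grows faster than $\log\log\log N$ then makes this $o(S_N^{\,2})$, completing the Borel--Cantelli step.

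The main obstacle is the quantitative control of the ``globally negligible'' exceptional pairs appearing in each of the three decoupling bullets: one must show that their combined contribution is also $O(S_N^{\,2})$, bounded termwise via the unconditional Pollington--Vaughan estimate of Lemma~\ref{lemmapv} summed against a suitably sparse counting function. A secondary technical point is the alignment $\eta_1<\eta$ between the decoupling exponent and the hypothesis exponent, which is what ensures that the ``matched $\psi$-scale'' regime fits inside $O(1)$ adjacent hypothesis cells and thereby makes the third-regime sum telescope cleanly.
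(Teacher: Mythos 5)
The paper does not actually supply a detailed proof of Theorem~\ref{th3}: Section~\ref{sec:proofth2th3} consists of a single remark that the proof ``can be given in the spirit of the one in \cite{aistslow}, using the decoupling lemmas in this paper to obtain the improved result.'' So there is no argument in the manuscript to compare against line by line; your proposal is filling a gap that the author deliberately left open. Your overall architecture --- split pairs $(m,n)$ into a ``far'' regime, a ``$\psi$-separated'' regime, and a ``$\psi$-matched'' regime according to the three decoupling bullets, handle the first two by quasi-independence, handle the third by the $\log\log\log n$ bound on $P(m,n)$ together with the block-sum hypothesis, then close with Lemma~\ref{lemmabc} and Gallagher's zero--one law --- is exactly the right template, and it is consistent with how Theorem~\ref{th1} is proved in Section~\ref{sec_proof_th} (there the averaging over $s$ plays the role that the block-sum hypothesis plays here).

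Two substantive remarks. First, in the matched regime your final estimate is looser than it needs to be, and the slack actually matters. The third bullet gives $P(m,n)\ll\log\log\log n$, and since $n\in J_h$ forces $\log n\asymp h$, this factor is $\asymp\log\log h$, not $\log\log\log N$. That is precisely what cancels the $1/\log\log h$ coming from the hypothesis:
\[
\sum_h (\log\log h)\Bigl(\sum_{m\in I_h}\lambda(\mathcal{E}_m)\Bigr)\cdot\frac{1}{\log\log h}\;=\;S_N,
\]
so the matched-regime contribution is $\ll S_N=o(S_N^2)$ unconditionally once $S_N\to\infty$. In contrast, your bound $\ll(\log\log\log N)\,S_N$ requires you to ``pass to a subsequence along which $S_N$ grows faster than $\log\log\log N$'' --- but nothing in the hypotheses guarantees such a subsequence exists, since \eqref{series_div} allows arbitrarily slow divergence. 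The $h$-local form of the cancellation removes this spurious obstruction. Second, the three ``bullets'' in the introduction are informal summaries; a rigorous write-up has to go through Lemmas~\ref{lemmak1k2} and~\ref{lemma_diag}, which are formulated on the doubly-dyadic cells $S_k^r$ of \eqref{skr_def}, not on your cells $J_h$, $B_{h,j}$. The translation is routine (each $J_h$ spans $\asymp\eta\log_2 h$ consecutive $k$'s, each $B_{h,j}$ spans $\asymp\eta\log_2 h$ consecutive $r$'s, and the lemmas' ``matched'' window $|k_1-k_2|,|r_1-r_2|<b_3\log k_1$ is absorbed in $O(1)$ adjacent $(h,j)$-cells precisely when $\eta_1$ is taken $<\eta$ as you do), but it is where the ``main obstacle'' you flag actually lives: the exceptional pair count from Lemma~\ref{lemma_diag}, bounded by $2^{r_1}2^{r_2}k_1^{-b_6}$ per pair of $S_k^r$-cells, has to be summed over the $\asymp(\log h)^2$ such cell pairs inside each matched $(h,j)$-window and shown to stay $\ll S_N^2$; this is a finite computation, and your remark that the unconditional Pollington--Vaughan bound (i.e.\ \eqref{prod_ratio}) suffices termwise there is the right observation, but it does need to be carried out.
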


Note that Theorem \ref{th2} improves the earlier ``slow divergence'' result in two directions. On the one hand, the summation range is reduced from double exponential to slightly more than exponential (you may take a moment to convince yourself that shortening the summation range indeed is an improvement). On the other hand, the required upper bound for the block sums is significantly weaker. As noted before, in a ``regular'' function $\psi$ the critical region for the block sums should be near $1/(h \log h)$ -- in Theorem \ref{th2} instead we require the upper bound $1/(\log \log h)$ for such block sums. Thus the conclusion of Theorem \ref{th2} can only fail if the mass of $\psi$ is extremely unevenly distributed. For many applications it should be possible to rule out such an extremely uneven distribution of the mass of the approximation function.

\subsection{GCD sums}

A GCD sum is a sum of the form
\begin{equation} \label{gcd_s}
\sum_{1 \leq k,\ell \leq N} \frac{(n_k,n_\ell)^{2 \alpha}}{(n_k n_\ell)^\alpha} \qquad \text{(without coefficients)}
\end{equation}
or
\begin{equation} \label{gcd_s2}
\sum_{1 \leq k,\ell \leq N} c_k c_\ell \frac{(n_k,n_\ell)^{2 \alpha}}{(n_k n_\ell)^\alpha} \qquad \text{(with coefficients)}.
\end{equation}
Here $\{n_1, \dots, n_N\}$ are distinct positive integers and $\alpha$ is a real parameter, usually from the range $[1/2,1]$. In the case of coefficients, the problem is normalized by assuming that $\sum c_k^2 = 1$. The most interesting problem for such sums is to find general upper bounds for \eqref{gcd_s} and \eqref{gcd_s2} which depend only on $N$, but not on the choice of $n_1, \dots, n_N$ or on the coefficients $c_1, \dots, c_N$.\\

It seems that such sums were first considered in the 1920s or 1930s by Erd\H os and Koksma in the context of Diophantine approximation, with the parameter $\alpha=1$. They realized that GCD sums can be used to give upper bounds for square-integrals (that is, variances) of sums of dilated functions; see \cite{koks} for an early reference. Actually, for a specific choice of the function there even is an equality; an example of such a relation is Franel's identity, which states that
\begin{equation} \label{franel}
\int_0^1 \left(\sum_{k=1}^N c_k (\{n_k x\} - 1/2)\right)^2 dx = \frac{\pi^2}{12} \sum_{k,\ell=1}^N c_k c_\ell \frac{(\gcd(n_k, n_\ell))^2}{n_k n_\ell},
\end{equation}
where $\{ \cdot \}$ denotes the fractional part. The problem of bounding GCD sums can also be seen in terms of bounding the maximal eigenvalue of certain symmetric matrices containing greatest common divisors -- this approach might have its first appearance in work of Wintner \cite{wintner} in 1944. Remarkably, the GCD sum can also be realized as an integral involving the Riemann zeta function, along a vertical line in the complex plane -- this is the viewpoint taken in \cite{l_r}. See \cite{a1} for a more detailed presentation of some of these connections.\\

The problem of finding the maximal asymptotic order of \eqref{gcd_s} in the case $\alpha=1$ was posed by Erd\H os, and solved by G\'al \cite{gal} in 1949. In the case $\alpha=1/2$, partial results were obtained by Dyer and Harman \cite{d_h} in 1986; these were applied by Harman \cite{har3,har2} to establish some special cases of the Duffin--Schaeffer conjecture.\\

In recent years there has been increased interest in GCD sums, and optimal bounds for the maximal order of \eqref{gcd_s} and \eqref{gcd_s2} have been established in all remaining cases. The case $\alpha=1$ in the situation with coefficients was solved by Lewko and Radziwi{\l}{\l} \cite{l_r}. The case $\alpha \in (1/2,1)$ was solved in \cite{a_seip}, and the case $\alpha = 1/2$ was solved in \cite{bond_s}. There is a ``phase transition'' in the behavior of the maximal order of the GCD sum with respect to the parameter $\alpha$, which is mirrored by a similar transition of the behavior of the zeta function $\zeta(\sigma+it)$ in the critical strip with respect to $\sigma$. In terms of metric number theory, GCD sums with parameter $\alpha=1$ are usually associated with sums of dilated function where the function is fixed as in \eqref{franel} or as in the convergence problems in \cite{a_seip,l_r}, while GCD sums with $\alpha=1/2$ correspond to ``shrinking targets'' such as sums of indicator functions of short intervals in metric Diophantine approximation, or as in the related context of pair correlations of parametric sequences (see for example \cite{all,walker,rud}). \\

The case $\alpha \in (0,1/2)$ seems to be much less natural. In this range the connection with the Riemann zeta function breaks down \cite{hilb}. Similarly, the connection with sums of dilated functions breaks down, since the corresponding function would not be in $L^2$ anymore. However, quite remarkable, it is this range of parameter which we use in the present paper, since it leads to the strongest results. It seems that this is the first time that GCD sums with parameter $\alpha$ smaller than 1/2 have been applied in a number-theoretic problem.\\

Very roughly speaking, the connection of the Duffin--Schaeffer problem with GCD sums is the following. As noted above, the overlap $\mathcal{E}_m \cap \mathcal{E}_n$ can only be too large when $D(m,n)$, as defined in \eqref{ddef}, lies in some critical range. Note that in $D(m,n)$ there is an explicit dependence on the GCD of $m$ and $n$. One can check that $D(m,n)$ can only be in the critical range when $D(m,n)$ is ``large'' in some appropriate sense. However, an upper bound for the GCD sum directly implies a bound for the number of pairs of indices for which the GCD can be large. Note that an argument of this type does not address the potential size of the overlaps for individual pairs of indices as in \cite{a2,extra}, but rather assesses the potential behavior of these overlaps on a global scale; in this sense our argument is much more in the spirit of the one in the proof of the Erd\H os--Vaaler theorem.\\

It turns out that the estimates for GCD sums only apply when we can assure that either $m$ and $n$, or that $\psi(m)$ and $\psi(n)$ differ in order by a logarithmic factor. To exploit this phenomenon we establish a sort of Cauchy--Schwarz inequality for GCD sums (Lemma \ref{lemma_h2}). What happens is that when $n$ moves away from $m$ (or when $\psi(n)$ moves away from $\psi(m)$), the GCD of $m$ and $n$ would need to grow linearly in $n/m$ to keep $D(m,n)$ in the critical range; however, our Cauchy--Schwarz inequality only allows the GCDs to grow proportional with $\sqrt{n/m}$, with the exception of a negligible set of pairs of indices. In the case when we cannot guarantee that $m$ and $n$ (or $\psi(m)$ and $\psi(n)$) are of different order, we introduce a sum-of-distinct-prime-divisors function into the GCD sum (Lemma \ref{lemma_h3}). Bounding the number of distinct prime divisors of $m$ and $n$ allows us to give an upper bound for the factor $P(m,n)$, which was defined in \eqref{pdef}.\\

It is not clear if GCD sums are the ``correct'' tool to exploit the phenomena that we observed above. It is probably difficult to estimate directly the number of pairs of indices $m$ and $n$ for which $D(m,n)$ can lie in the critical range, and the corresponding maximal size of $P(m,n)$. The situation becomes easier by translating the problem into a problem involving GCD sums, since it can be shown that such sums are maximized by sets of integers which have a very strong multiplicative structure, and for such special sets the GCD sum can be efficiently evaluated. For further improvements, it seems that one would not only have to estimate the size of the greatest common divisors or the number of distinct prime divisors involved, but rather to determine the structure of the set of greatest common divisors themselves. Morally speaking, one might hope that in a set of integers where pairwise greatest common divisors are very large, there should be a \emph{common} large factor which appears in the factorization of all these integers. At such a point, one could hope to discard this common large factor and to exploit the same phenomena as in the Erd\H os--Vaaler theorem in an ``uplifted'' setting.\\

\emph{Closing remark: The strategy sketched in the previous paragraph is essentially the one which is used in Koukoulopoulos' and Maynard's proof. Instead of working with GCD sums, they introduce a much more subtle structure which they call ``GCD graph'', on which they perform a ``descend'' along ``GCD subgraphs'' towards a setting where they can single out a large common divisor and apply a variant of the Erd\H os--Vaaler argument. While the GCD sum can only control the size of common divosors, the GCD graph can also control structural properties of the divisor system. A trace of the descent along the GCD subgraphs in the K-M argument can be found in the way how upper bounds for GCD sums are proved by a transition towards the worst-case (divisor-closed, square-free, etc.) scenario.}

\section{Auxiliary results}

We will use Lemma \ref{lemmapv}. As noted, it is well-known that for $m < n$
\begin{equation} \label{pmn}
P(m,n) \ll_\eta 1 \qquad \text{if} \qquad D(m,n) \geq (\log n)^{\eta},
\end{equation}
for any $\eta >0$. Furthermore, the factor $P(m,n)$ is of order at most $\log \log n$, and thus
\begin{equation} \label{prod_ratio}
\lambda(\mathcal{E}_m \cap \mathcal{E}_n) \ll \lambda(\mathcal{E}_m) \lambda(\mathcal{E}_n) \log \log n.
\end{equation}
Both facts follow easily from Mertens' theorems. As a reference, see for example the first formula on p.~132 of \cite{extra}.\\

We will use the following version of the second Borel--Cantelli lemma (see for example \cite[Lemma 2.3]{harman}).
\begin{lemma} \label{lemmabc}
Let $\mathcal{A}_n,~n =1,2,\dots$, be events in a probability space $(\Omega,\mathcal{F},\mathbb{P})$. Let $\mathcal{A}$ be the set of $\omega \in \Omega$ which are contained in infinitely many $\mathcal{A}_n$. Assume that
$$
\sum_{n=1}^\infty \mathbb{P}(\mathcal{A}_n) = \infty.
$$
Then
$$
\mathbb{P} (\mathcal{A}) \geq \limsup_{N \to \infty} \frac{\left( \sum_{n=1}^N \mathbb{P}(\mathcal{A}_n) \right)^2}{\sum_{1 \leq m,n \leq N} \mathbb{P} (\mathcal{A}_m \cap \mathcal{A}_n)}.
$$
\end{lemma}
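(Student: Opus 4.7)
The plan is to apply a Paley--Zygmund type inequality to the counting random variables $S_N := \sum_{n=1}^N \mathbf{1}_{\mathcal{A}_n}$, for which $\mathbb{E}[S_N] = \sum_{n=1}^N \mathbb{P}(\mathcal{A}_n)$ and $\mathbb{E}[S_N^2] = \sum_{1 \leq m,n \leq N} \mathbb{P}(\mathcal{A}_m \cap \mathcal{A}_n)$. The Cauchy--Schwarz inequality, applied to the decomposition $S_N = S_N \cdot \mathbf{1}_{\{S_N > 0\}}$, gives
$$
(\mathbb{E}[S_N])^2 \;\leq\; \mathbb{E}[S_N^2] \cdot \mathbb{P}(S_N > 0),
$$
whence $\mathbb{P}(S_N > 0) \geq (\mathbb{E}[S_N])^2 / \mathbb{E}[S_N^2]$. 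This is the elementary engine driving the whole argument.

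To pass from ``at least one $\mathcal{A}_n$ occurs'' to ``infinitely many $\mathcal{A}_n$ occur'', I would run the same bound on the tail-truncated counting variable $S_N^{(M)} := \sum_{n=M+1}^N \mathbf{1}_{\mathcal{A}_n}$, for fixed $M$. The event $\{S_N^{(M)} > 0\}$ is contained in $\bigcup_{n > M} \mathcal{A}_n$, so
$$
\mathbb{P}\Bigl(\bigcup_{n > M} \mathcal{A}_n\Bigr) \;\geq\; \frac{(\mathbb{E}[S_N^{(M)}])^2}{\mathbb{E}[(S_N^{(M)})^2]}.
$$
For the numerator, the divergence assumption $\sum \mathbb{P}(\mathcal{A}_n) = \infty$ ensures that $\mathbb{E}[S_N^{(M)}] = \mathbb{E}[S_N] - O_M(1) = (1-o(1)) \mathbb{E}[S_N]$ as $N \to \infty$. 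For the denominator, simply bounding $\mathbb{E}[(S_N^{(M)})^2] \leq \mathbb{E}[S_N^2]$ by dropping cross-terms involving an index $\leq M$ suffices. Taking $\limsup$ in $N$, one obtains
$$
\mathbb{P}\Bigl(\bigcup_{n > M} \mathcal{A}_n\Bigr) \;\geq\; \limsup_{N \to \infty} \frac{(\sum_{n=1}^N \mathbb{P}(\mathcal{A}_n))^2}{\sum_{1 \leq m,n \leq N} \mathbb{P}(\mathcal{A}_m \cap \mathcal{A}_n)}
$$
uniformly in $M$.

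The final step is to write $\mathcal{A} = \bigcap_{M \geq 1} \bigcup_{n > M} \mathcal{A}_n$ as a decreasing intersection and apply continuity of measure to conclude that $\mathbb{P}(\mathcal{A}) = \lim_{M \to \infty} \mathbb{P}(\bigcup_{n > M} \mathcal{A}_n)$, which inherits the same lower bound. There is no real obstacle here: the only even mildly delicate step is justifying that truncating the first $M$ indices leaves the $\limsup$ unchanged, and this is essentially automatic from the divergence hypothesis. The whole argument is short and purely measure-theoretic, requiring no probabilistic independence whatsoever.
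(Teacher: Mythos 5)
Your proof is correct and is the standard argument for this form of the divergence Borel--Cantelli lemma; the paper itself does not reprove the statement but simply cites \cite[Lemma 2.3]{harman}, where essentially the same Cauchy--Schwarz/Paley--Zygmund argument applied to the truncated counting variables appears. One small point worth spelling out when you write $\mathbb{E}[(S_N^{(M)})^2] \leq \mathbb{E}[S_N^2]$: this follows because $S_N = S_M + S_N^{(M)}$ with both summands nonnegative, so $S_N^2 \geq (S_N^{(M)})^2$ pointwise; and the passage $\limsup_N (\mathbb{E}[S_N]-\mathbb{E}[S_M])^2/\mathbb{E}[S_N^2] = \limsup_N (\mathbb{E}[S_N])^2/\mathbb{E}[S_N^2]$ is legitimate precisely because $\mathbb{E}[S_N]\to\infty$ while $\mathbb{E}[S_M]$ is fixed, so the correction factor $(1-\mathbb{E}[S_M]/\mathbb{E}[S_N])^2$ tends to $1$. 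With those two details made explicit the argument is complete.
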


For positive integers $r$ and $r$ we define
\begin{equation} \label{skr_def}
S_k^r := \Big\{2^k < n \leq 2^{k+1}:~ \psi(n) \in [2^{-r},2^{-r-1}] \Big\}.
\end{equation}

It is well-known that in the Duffin-Schaeffer conjecture we can assume that $1/n \leq \psi(n) \leq 1/2$ for all $n$. The first inequality is the Erd\H{o}s--Vaaler theorem \cite{vaa}, the second inequality is in \cite{pv}. Thus, throughout this paper, in the decomposition into sets $S_k^r$ we can always assume that $r \leq k$. Furthermore, we may also assume throughout the paper that 
\begin{equation} \label{skr_size}
\# S_k^r \leq k 2^r
\end{equation}
for all $k$ and $r$. Indeed, assume on the contrary that $\# S_k^r \geq k 2^r$ for infinitely many pairs of $k$ and $r$. It is easily see that this implies that
$$
\sum_{n \in S_k^r} \frac{\lambda(\mathcal{E}_n)}{\log n} \gg 1
$$
for such pairs $k$ and $r$, a situation in which the extra divergence result \eqref{extra_th} applies. Thus we may assume throughout the rest of this paper that \eqref{skr_size} holds. Furthermore, we may also assume throughout this paper that
\begin{equation} \label{skr_lower}
\# S_k^r \geq \frac{2^r}{k^2},
\end{equation}
since otherwise 
$$
\sum_{n \in S_k^r} \lambda(\mathcal{E}_n) \ll \frac{1}{k^2}, 
$$
and accordingly the integers in $S_k^r$ do not contribute to the divergence of the series \eqref{div}, and we may completely remove them from the support of $\psi$.\\

The following lemma is \cite[Theorem 1]{hilb}, in the special case $\alpha=1/4$. 

\begin{lemma} \label{lemma_h1}
There exists a constant $b_1>0$ such that the following holds. Let $\mathcal{M}$ denote a finite set of distinct positive integers, and write $N = \# \mathcal{M}$. Then
$$
\sum_{m,n \in \mathcal{M}} \frac{(m,n)^{1/2}}{(mn)^{1/4}} \leq N^{3/2} (\log N)^{b_1},
$$
provided that $N$ is sufficiently large.
\end{lemma}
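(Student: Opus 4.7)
The statement is the specialization $\alpha=1/4$ of \cite[Theorem~1]{hilb}, so the simplest ``proof'' is to quote that reference. For a self-contained argument, the plan is to follow the G\'al--Hilberdink extremal strategy for GCD sums in the range $\alpha \in (0,1/2)$.

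First, I would apply a combinatorial swapping reduction in the spirit of G\'al \cite{gal} and the refinements in \cite{a_seip,bond_s,l_r,hilb}. The goal is to show that, up to an absolute multiplicative constant, the supremum of $\sum_{m,n \in \mathcal{M}} (m,n)^{1/2}(mn)^{-1/4}$ over all $N$-element sets is attained on a divisor-closed set $\mathcal{M}^\ast$ consisting of square-free integers with cardinality $N^\ast$ comparable to $N$ (up to factors that are absorbed on the logarithmic scale). The swap moves replace each $m \in \mathcal{M}$ by an appropriately chosen divisor in a way that does not decrease the GCD sum, and are iterated until the required multiplicative structure is achieved.

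Second, on a divisor-closed square-free set, the elements correspond to subsets of the underlying prime set $P = P(\mathcal{M}^\ast)$, and the GCD sum factors as an Euler-type product over $P$. The local factor at each prime $p$ has the shape $1 + O(p^{-1/2})$; this is precisely the qualitative improvement that drives the phase transition at $\alpha=1/2$, since $\sum_p p^{-1/2}$ diverges much faster than $\sum_p p^{-1}$. Evaluating the product using Mertens-type estimates yields a bound of the form $N^{3/2} \prod_{p \in P} (1 + O(p^{-1/2}))$, where the polynomial factor $N^{3/2}=N^{2-2\alpha}$ reflects the ``typical'' size of the GCD sum at $\alpha=1/4$ (for random $m,n \sim N$ one has $(m,n)^{1/2}(mn)^{-1/4} \sim N^{-1/2}$, and summation over $N^2$ pairs gives $N^{3/2}$).

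The third step is to optimize by splitting $P$ at a polylogarithmic threshold $Y = (\log N)^{c}$: primes above $Y$ contribute only a bounded factor, while the contribution of the primes below $Y$ is controlled by $(\log N)^{b_1}$ for some explicit $b_1>0$ depending on $c$ and on the constants in Mertens' theorem. The main obstacle is the extremal reduction in the first step, where the swap moves must be simultaneously monotone in the GCD sum and controlled in how they affect the cardinality of the set; once that reduction is in place, the Euler product evaluation and the threshold optimization are essentially routine calculations.
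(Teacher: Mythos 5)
Your first sentence is exactly the paper's proof: the paper gives no argument of its own for this lemma but simply states that it is \cite[Theorem 1]{hilb} specialized to $\alpha=1/4$. Your additional self-contained sketch is a reasonable summary of the strategy in \cite{hilb} (reduction to divisor-closed square-free sets, then an Euler-product evaluation), though the local factor at $\alpha=1/4$ is $1+O(p^{-1/4})$ rather than $1+O(p^{-1/2})$; this does not affect the correctness of the citation-based proof.
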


\begin{lemma} \label{lemma_h2}
There exists a constant $b_2>0$ such that the following holds. Let $\mathcal{M}_1$ and $\mathcal{M}_2$ denote two finite set of distinct positive integers, and write $N_1 = \# \mathcal{M}_1$ and $N_2 = \# \mathcal{M}_2$. Assume w.l.o.g.\ that $N_1 \leq N_2$. Assume furthermore that $\mathcal{M}_1 \subset \{2^{k_1},2^{k_1+1}\}$ and $\mathcal{M}_2 \subset \{2^{k_2},2^{k_2+1}\}$ for some positive integers $k_1,k_2$. Let $R>1$ be a real number. Then
$$
\sum_{\substack{m \in \mathcal{M}_1,n \in \mathcal{M}_2,\\ R \leq (m,n) \leq R \log N_2}} \frac{(m,n)^{1/2}}{(mn)^{1/4}} \leq (N_1 N_2)^{3/4} (\log N_2)^{b_2},
$$
provided that $N_1$ and $N_2$ are sufficiently large.
\end{lemma}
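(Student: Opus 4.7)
The plan is to exploit the positive semi-definiteness of the kernel $K(m,n) := (m,n)^{1/2}/(mn)^{1/4}$ in order to separate the mixed sum over $\mathcal{M}_1 \times \mathcal{M}_2$ into two diagonal GCD sums, one over $\mathcal{M}_1$ and one over $\mathcal{M}_2$, each of which is then controlled by Lemma \ref{lemma_h1}. Once this Cauchy--Schwarz decoupling is available, the restriction $R \leq (m,n) \leq R \log N_2$ plays no role and can simply be dropped, since $K \geq 0$ and the restricted sum is trivially bounded by the full sum; the dyadic location assumption on $\mathcal{M}_1, \mathcal{M}_2$ is likewise not used at this level, and is presumably only imposed because the lemma is stated to interface with its downstream applications.

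The positive semi-definiteness I need comes from the Jordan-totient identity $(m,n)^{1/2} = \sum_{d \mid (m,n)} J_{1/2}(d)$, where $J_{1/2}(d) = d^{1/2} \prod_{p \mid d}(1 - p^{-1/2}) > 0$ for every $d \geq 1$. Dividing by $(mn)^{1/4}$ gives the explicit Gram representation
\[
K(m,n) = \sum_{d \geq 1} \Big(\frac{J_{1/2}(d)^{1/2} \mathbf{1}_{d \mid m}}{m^{1/4}}\Big) \Big(\frac{J_{1/2}(d)^{1/2} \mathbf{1}_{d \mid n}}{n^{1/4}}\Big) = \langle \phi_m, \phi_n \rangle_{\ell^2(\mathbb{N})},
\]
where $\phi_m \in \ell^2(\mathbb{N})$ has $d$-th coordinate $J_{1/2}(d)^{1/2} \mathbf{1}_{d \mid m}/m^{1/4}$, and these vectors are in fact unit vectors since $\|\phi_m\|^2 = K(m,m) = 1$.

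With this in hand, Cauchy--Schwarz in $\ell^2(\mathbb{N})$ gives
\[
\sum_{\substack{m \in \mathcal{M}_1, n \in \mathcal{M}_2\\ R \leq (m,n) \leq R \log N_2}} K(m,n) \;\leq\; \Big\langle \sum_{m \in \mathcal{M}_1}\phi_m, \sum_{n \in \mathcal{M}_2}\phi_n\Big\rangle \;\leq\; \Big(\sum_{m,m' \in \mathcal{M}_1}K(m,m')\Big)^{1/2} \Big(\sum_{n,n' \in \mathcal{M}_2}K(n,n')\Big)^{1/2}.
\]
Applying Lemma \ref{lemma_h1} to each factor produces $N_1^{3/4}(\log N_1)^{b_1/2} \cdot N_2^{3/4}(\log N_2)^{b_1/2}$, which, since $N_1 \leq N_2$, is at most $(N_1 N_2)^{3/4}(\log N_2)^{b_1}$, yielding the claim with $b_2 = b_1$.

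The only point that requires any care is the positive semi-definiteness, but it reduces to the elementary fact that $J_{1/2}$ takes only positive values. The choice of exponent $1/4$ in Lemma \ref{lemma_h1} is precisely what allows this strategy to close: Cauchy--Schwarz raises the exponent inside the kernel back up to $1/2$, exactly where Lemma \ref{lemma_h1} provides the needed single-set bound $N^{3/2}(\log N)^{b_1}$. I do not anticipate a serious obstacle; the argument is essentially an off-diagonal version of the bound for single-set GCD sums.
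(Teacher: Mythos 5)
Your argument is correct, and it takes a genuinely different route from the paper's. You observe that the kernel $K(m,n)=(m,n)^{1/2}/(mn)^{1/4}$ is positive semi-definite via the Jordan-totient decomposition $(m,n)^{1/2}=\sum_{d\mid(m,n)}J_{1/2}(d)$, build the explicit Gram vectors $\phi_m$, discard the $R\leq (m,n)\leq R\log N_2$ restriction by positivity, and then decouple $\mathcal{M}_1$ and $\mathcal{M}_2$ by Cauchy--Schwarz in $\ell^2(\mathbb{N})$, feeding each diagonal block into Lemma~\ref{lemma_h1}. The paper instead realizes the GCD sum as $\int_0^1\big(\sum_{m\in\mathcal{M}_1}f(mx)\big)\big(\sum_{n\in\mathcal{M}_2}f(nx)\big)\,dx$ with $f(x)=\sum_{j\in\mathcal{J}}j^{-1/4}\sin(2\pi jx)$ for a carefully truncated frequency set $\mathcal{J}$ tailored to $R$, $k_1$, $k_2$, applies the ordinary Cauchy--Schwarz inequality to the integral, and uses orthogonality together with Lemma~\ref{lemma_h1} to bound the two $L^2$-norms. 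That construction is exactly what forces the paper to (i) restrict $\mathcal{M}_1,\mathcal{M}_2$ to dyadic blocks, (ii) impose the window $R\leq(m,n)\leq R\log N_2$ so that the $h=1$ solutions of $j_1m=j_2n$ land inside $\mathcal{J}$, and (iii) pay an extra $\log N_2$ in each diagonal block (so $b_2>b_1+1$), since distinct pairs $m,n$ can produce the same frequency and $|\mathcal{J}|$ controls the multiplicity. Your argument dispenses with all three: the dyadic hypothesis, the GCD window, and the extra logarithm (you get $b_2=b_1$) are superfluous, and you prove the cleaner, stronger bound $\sum_{m\in\mathcal{M}_1,n\in\mathcal{M}_2}K(m,n)\leq (N_1N_2)^{3/4}(\log N_2)^{b_1}$ for arbitrary finite sets. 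What the paper's route buys, by contrast, is a derivation consistent with the ``$L^2$ of dilated functions'' viewpoint used throughout its discussion of GCD sums; but for the specific purpose of Lemma~\ref{lemma_h2}, your PSD/Gram approach is both more elementary and quantitatively slightly sharper.
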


\begin{lemma} \label{lemmak1k2}
Let $S_{k_1}^{r_1}$ and $S_{k_2}^{r_2}$ be two sets as defined above, such that $k_1 \geq k_2$. Let $b_3$ be a positive constant such that $\frac{b_3 (\log 2)}{8} > b_2 + 4$, where $b_2$ is the constant from the previous lemma. Assume that either 
\begin{equation} \label{lemmak1k2ass}
|k_1 - k_2| \geq b_3 \log k_1 \qquad \textup{or} \qquad |r_1 - r_2| \geq b_3 \log k_1.
\end{equation}
Then
$$
\sum_{m \in S_{k_1}^{r_1}} \sum_{n \in S_{k_2}^{r_2}} \lambda(\mathcal{E}_m \cap \mathcal{E}_n) \ll \left(\sum_{m \in S_{k_1}^{r_1}} \lambda(\mathcal{E}_m)\right) \left(\sum_{n \in S_{k_2}^{r_2}} \lambda(\mathcal{E}_n)\right).
$$
\end{lemma}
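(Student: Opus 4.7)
The plan is to split the pairs by the size of $D(m,n)$. When $D(m,n) \geq (\log n)^\eta$ the bound \eqref{pmn} gives $P(m,n) \ll 1$, so $\lambda(\mathcal{E}_m \cap \mathcal{E}_n) \ll \lambda(\mathcal{E}_m)\lambda(\mathcal{E}_n)$ and the contribution of these non-critical pairs is bounded by a constant times the right-hand side of the lemma. The work lies in the critical range $1 \leq D(m,n) < (\log n)^\eta$, where \eqref{prod_ratio} only yields $\lambda(\mathcal{E}_m \cap \mathcal{E}_n) \ll \log\log n \cdot \lambda(\mathcal{E}_m)\lambda(\mathcal{E}_n)$ and a polylogarithmic saving on the number of critical pairs is needed. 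Each critical pair is forced to satisfy $(m,n) \in [L/(\log n)^\eta,\, L]$ with $L = \max(n\psi(m),\, m\psi(n)) \leq 2\max(2^{k_2-r_1},\, 2^{k_1-r_2})$.

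I would count the critical pairs by applying Lemma \ref{lemma_h2} with $R$ placed at the lower endpoint of the critical GCD range, covering $[R,L]$ by a single application when $\log N_2 \geq (\log k_1)^\eta$ and by $O((\log k_1)/\log\log N_2)$ geometrically decaying dyadic subranges otherwise. Using the pointwise inequality $(m,n)^{1/2}/(mn)^{1/4} \gg R^{1/2}\,2^{-(k_1+k_2)/4}$ for pairs in the range converts the GCD sum estimate into a count
\[
\#\{(m,n)\text{ critical}\} \;\ll\; (N_1 N_2)^{3/4}(\log k_1)^{b_2+O(1)} \cdot \frac{2^{(k_1+k_2)/4}}{L^{1/2}}.
\]
Multiplying by the per-pair estimate $\log\log n \cdot 2^{-r_1-r_2+O(1)}$ and dividing by the lower bound $\bigl(\sum \lambda(\mathcal{E}_m)\bigr)\bigl(\sum \lambda(\mathcal{E}_n)\bigr) \gg N_1 N_2 \cdot 2^{-r_1-r_2}/(\log\log k_1)^2$, and then using \eqref{skr_lower} to replace $(N_1 N_2)^{-1/4}$ by $k_1^{O(1)}\, 2^{-(r_1+r_2)/4}$, the ratio of the left-hand side of the lemma to its right-hand side reduces to an expression of the form $(\log k_1)^{b_2+O(1)}\, 2^{-\delta/c}$, where $c$ is an absolute constant and $\delta$ is either $|k_1-k_2|$ or $|r_1-r_2|$ according to which term realises the maximum defining $L$. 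The hypothesis $\max(|k_1-k_2|,|r_1-r_2|) \geq b_3 \log k_1$ together with the numerical condition $b_3 \log 2/8 > b_2+4$ are then engineered exactly so that the factor $2^{-\delta/c}$ beats $(\log k_1)^{b_2+O(1)}$, closing the bound.

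The main obstacle is to arrange the calculation so that $\delta$ is truly the larger of the two differences, rather than their difference: a naive reading of the bookkeeping produces a savings exponent proportional to $|(k_1-k_2)-(r_2-r_1)|$, which can degenerate even when both individual differences $|k_1-k_2|$ and $|r_1-r_2|$ are large. The cleanest way around this is to carry out the count separately in the two cases $L=2^{k_2-r_1}$ and $L=2^{k_1-r_2}$: in the first, after using \eqref{skr_lower} for $N_2$ the factor $L^{-1/2}$ leaves a net saving controlled by $r_2-r_1$; in the second, symmetrically, the saving is controlled by $k_1-k_2$. In either configuration one of the two disjuncts of the hypothesis forces $\delta \geq b_3 \log k_1$, and the generous margin in the constraint $b_3\log 2/8 > b_2+4$ leaves enough room to absorb the multiplicative losses coming from the dyadic splitting of the GCD range and from the $\log\log n$ factor in \eqref{prod_ratio}.
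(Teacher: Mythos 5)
Your overall strategy is right and, for the ``off‑diagonal'' configurations, matches the paper's Case 1: identify the critical GCD window from $D(m,n)$, apply Lemma~\ref{lemma_h2} to count pairs in that window, and then weigh the critical pairs with the $\log\log n$ loss from \eqref{prod_ratio}. You also put your finger on exactly the right difficulty: the naive accounting yields a saving exponent proportional to $|(k_1-r_2)-(k_2-r_1)| = |(k_1-k_2)-(r_2-r_1)|$, which vanishes when $k_1-k_2\approx r_2-r_1$ even though both differences can be of size $b_3\log k_1$.

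However, the fix you propose does not actually remove the degeneracy. Splitting according to which term realizes $L=\max(2^{k_1-r_2},2^{k_2-r_1})$ does not change the algebra. In either case the exponent that survives from
$$
(N_1N_2)^{-1/4}\,\frac{2^{(k_1+k_2)/4}}{L^{1/2}} \;\asymp\; 2^{\big((k_1-r_2)+(k_2-r_1)\big)/4 - \max(k_1-r_2,\,k_2-r_1)/2}
$$
is precisely $-\tfrac14\big|(k_1-r_2)-(k_2-r_1)\big|$ — it is symmetric in the two terms defining the max, so you cannot, by choosing a case, convert it into $r_2-r_1$ or $k_1-k_2$ alone. Concretely, for $k_1-k_2 = r_2-r_1 = b_3\log k_1$ (so the lemma's hypothesis holds with room to spare), the exponent is $0$, and the polylogarithmic factors are not beaten. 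This is a genuine gap: your argument closes the estimate only in what the paper calls Case 1, i.e.\ when $|(k_1-r_2)-(k_2-r_1)|\geq \frac{b_3}{2}\log k_1$.

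What is missing is the paper's Case 2 device. When $|(k_1-r_2)-(k_2-r_1)|$ is small, the hypothesis \eqref{lemmak1k2ass} forces $|k_1-k_2|\geq \frac{b_3}{2}\log k_1$ (and likewise $|r_1-r_2|\geq \frac{b_3}{2}\log k_1$). In this regime the extra input is the trivial pointwise upper bound $(m,n)\leq n$, i.e.
$$
\frac{(m,n)^{1/2}}{(mn)^{1/4}} \;\leq\; \frac{n^{1/2}}{(mn)^{1/4}} \;=\; \Big(\tfrac{n}{m}\Big)^{1/4} \;\ll\; 2^{(k_2-k_1)/4},
$$
so that $\frac{(m,n)}{(mn)^{1/2}} \ll 2^{(k_2-k_1)/4}\cdot \frac{(m,n)^{1/2}}{(mn)^{1/4}}$. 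Counting critical pairs with this upgraded summand (and then lower‑bounding $L$ by the geometric mean $2^{((k_1-r_2)+(k_2-r_1))/2}$) produces the missing factor $2^{(k_2-k_1)/4}\leq k_1^{-b_3(\log 2)/8}$, which — given the numerical condition on $b_3$ — is exactly what beats the polylogarithmic losses. So the structural case split should be on the size of $|(k_1-r_2)-(k_2-r_1)|$, and in the small case a genuinely new pointwise GCD upper bound is needed, not just a rearrangement of the Lemma~\ref{lemma_h2} count.

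Two smaller remarks: the lower bound $\sum_{m\in S_{k_1}^{r_1}}\lambda(\mathcal{E}_m)\gg N_1 2^{-r_1}/\log k_1$ uses $\varphi(m)/m\gg 1/\log\log m$ with $\log\log m\asymp \log k_1$, so the denominator is $\log k_1$, not $\log\log k_1$ (harmless, both polylogarithmic). And your remark about dyadically covering $[L/(\log k_1)^\eta,\,L]$ by $O(\log k_1/\log\log N_2)$ applications of Lemma~\ref{lemma_h2} is the right way to handle the window when $N_2$ is small; this polylogarithmic overhead can indeed be absorbed by the margin in $b_3$.
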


Informally speaking, Lemma \ref{lemmak1k2} says the following. Let $S_{k_1}^{r_1}$ and $S_{k_2}^{r_2}$ be two sets of indices as above. Assume that both $S_{k_1}^{r_1}$ and $S_{k_2}^{r_2}$ are so large that they contribute to the divergence of the series \eqref{div}. Then the set systems ${E}_m, ~m \in S_{k_1}^{r_1}$ and $E_n, ~ n \in S_{k_2}^{r_2}$ are essentially independent, provided that either the elements of $S_{k_1}^{r_1}$ and $S_{k_2}^{r_2}$, or the individual measures assigned to these elements, are of significantly different order. Here ``significantly different'' means multiplication by a power of logarithm. For example, the lemma applies if all the elements of $S_{k_1}^{r_1}$ are of order roughly $N$ for some $N$, and the elements of $S_{k_2}^{r_2}$ are of order at least $N (\log N)^{100}$. Similarly, the lemma applied if there is a factor of the size of a power of $\log$ separating the order of $\psi(m),~m \in S_{k_1}^{r_1}$ and $\psi(n),~n \in S_{k_2}^{r_2}$.\\

Note that a statement like Lemma \ref{lemmak1k2} is \emph{not} true for individual sets $\mathcal{E}_m$ and $\mathcal{E}_n$. The machinery of the lemma only applies to sets systems containing many elements, not to individual configurations. For such individual configurations, much more is necessary to guarantee ``quasi-independence''. For example, as noted in \cite{extra}, we have $\lambda(\mathcal{E}_m \cap \mathcal{E}_n) \ll \lambda(\mathcal{E}_m) \lambda(\mathcal{E}_n)$ provided that $n \geq m^4$; in other words, to have quasi-independence for individual sets we need $n \geq m^4$ rather than $n \geq m (\log m)^c$, which obviously is a much stronger requirement.\\

As a consequence of Lemma \ref{lemmak1k2}, the real problem is to control the overlaps of sets from $S_{k_1}^{r_1}$ with sets from $S_{k_2}^{r_2}$ in the case when both $k_1$ and $k_2$ as well as $r_1$ and $r_2$ are very close to each other (that is, overlaps of sets $\mathcal{E}_m$ and $\mathcal{E}_n$ such that $m$ and $n$ are of comparable size, and $\psi(m)$ and $\psi(n)$ also are of comparable size). A particular instance of this problem is $k_1=k_2$ and $r_1=r_2$, i.e.\ when we try to control the overlap of sets from $S_{k}^r$ with other elements from $S_k^r$. We cannot control these overlaps in a way similar to Lemma \ref{lemmak1k2}, since that lemma relies on a Cauchy-Schwarz type estimate which only works when either $k_1-k_2$ or $r_1-r_2$ are large. However, we can introduce an additional omega-function (number of distinct prime divisors function) into the GCD sum estimate, and deduce that even if there might be many pairs of indices $m$ and $n$ for which the overlap $\mathcal{E}_m \cap \mathcal{E}_n$ is too large, then at least we can guarantee that typically such $m$ and $n$ cannot have too many distinct prime divisors, a fact which we can use to bound the function $P(m,n)$ as defined in \eqref{pdef}.

\begin{lemma} \label{lemma_h3}
There exists a constant $b_4>0$ such that the following holds. Let $\mathcal{M}$ denote a finite set of distinct positive integers, and write $N = \# \mathcal{M}$. Let $\omega(\cdot)$ denote the number of distinct prime factors of an integer. Then
$$
\sum_{m,n \in \mathcal{M}} \frac{(m,n)^{1/2}}{(mn)^{1/4}} 2^{\omega(mn/(m,n)^2)} \leq N^{3/2} (\log N)^{b_4},
$$
provided that $N$ is sufficiently large.
\end{lemma}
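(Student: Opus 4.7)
The plan is to upgrade Lemma \ref{lemma_h1} by carrying the multiplicative weight $2^{\omega(mn/(m,n)^2)}$ through the argument. A key preliminary observation is the factorisation
$$2^{\omega(mn/(m,n)^2)} \;=\; 2^{\omega(m/(m,n))} \cdot 2^{\omega(n/(m,n))} \;\le\; 2^{\omega(m)} \cdot 2^{\omega(n)},$$
which follows from the coprimality of $m/(m,n)$ and $n/(m,n)$ together with the additivity of $\omega$ over coprime factorisations. This decouples the weight into a factor depending only on $m$ and one depending only on $n$.

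I would first try the following direct approach. Expanding $2^{\omega(m)} = \sum_{d \mid m,\, \mu^2(d) = 1} 1$ and interchanging summations rewrites the total as
$$\sum_{d_1,d_2 \text{ squarefree}} \; \sum_{\substack{m \in \mathcal{M},\, d_1 \mid m \\ n \in \mathcal{M},\, d_2 \mid n}} \frac{(m,n)^{1/2}}{(mn)^{1/4}}.$$
For each fixed pair $(d_1,d_2)$, Cauchy--Schwarz together with Lemma \ref{lemma_h1} applied separately to $\mathcal{M}_{d_i} := \{m \in \mathcal{M} : d_i \mid m\}$ would yield an inner bound of order $(\#\mathcal{M}_{d_1} \cdot \#\mathcal{M}_{d_2})^{3/4} (\log N)^{b_1}$. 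The outer double sum then reduces to controlling $\bigl(\sum_{d \text{ sqfree}} (\#\mathcal{M}_d)^{3/4}\bigr)^2$, which by H\"older's inequality is controlled in turn by $\sum_{m \in \mathcal{M}} 2^{\omega(m)}$ together with the number of distinct squarefree divisors of elements of $\mathcal{M}$.

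The main obstacle is that this direct expansion is lossy: it discards the coprimality between $m/(m,n)$ and $n/(m,n)$ that was exploited above, and a worst-case estimate $\sum_{m \in \mathcal{M}} 2^{\omega(m)} \ll N \cdot \max_m 2^{\omega(m)}$ overshoots the desired polylogarithmic target. To recover the correct order I would fall back on one of two refinements. The first is to split $\mathcal{M}$ according to the size of $\omega(m)$: elements with $\omega(m) > C \log \log N$ are rare by the Hardy--Ramanujan theorem and can be absorbed via a trivial bound combined with Lemma \ref{lemma_h1}, while the remaining elements satisfy $2^{\omega(m)} \ll (\log N)^{C \log 2}$, so the calculation above gives what we want. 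The more principled alternative, which I expect gives the sharper constant $b_4$, is to mimic the proof of Lemma \ref{lemma_h1} itself (Hilberdink's theorem for $\alpha=1/4$): the supremum is attained on extremal divisor-closed configurations on which the weighted GCD sum still factorises as an Euler product, and the extra weight $2^{\omega(\cdot)}$ enlarges the local factor at prime $p$ by at most $1 + O(p^{-1/4})$. A Mertens-type estimate, combined with the fact that the extremal configurations are supported on primes bounded by a fixed power of $\log N$, then shows that the product of these local corrections contributes only $(\log N)^{O(1)}$, which is exactly the $(\log N)^{b_4}$ factor in the statement.
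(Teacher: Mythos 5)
Your second alternative (mimicking Hilberdink's argument from \cite{hilb}) is essentially the paper's route, but your first refinement is wrong, and even in the second alternative you miss the specific device that makes the paper's proof short.

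The Hardy--Ramanujan step does not work. Hardy--Ramanujan controls the typical size of $\omega(n)$ over \emph{all} integers $n\le x$, relative to $\log\log n$, not relative to $\log\log N$ where $N=\#\mathcal{M}$. Since $\mathcal{M}$ is an arbitrary set of $N$ distinct positive integers, nothing prevents \emph{every} element from having a huge number of prime factors: take $Q=p_1\cdots p_k$ with $k$ much larger than $\log\log N$ and let $\mathcal{M}=\{Q,2Q,\dots,NQ\}$, so $\omega(m)\ge k$ for all $m\in\mathcal{M}$. Then the "rare" subset is all of $\mathcal{M}$ and the trivial bound gives nothing. Also, the inequality $2^{\omega(mn/(m,n)^2)}\le 2^{\omega(m)}2^{\omega(n)}$ that starts the direct approach throws away the coprimality cancellation, as you note, and the divisor expansion followed by Cauchy--Schwarz overcounts precisely because the sets $\mathcal{M}_{d}$ are far from disjoint; there is no way to rescue the naive outer sum without some additional structural input.

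The structural input the paper uses is Lemma 1 of \cite{hilb}: for any $\mathcal{M}$ of cardinality $N$, there is a \emph{divisor-closed} set $\mathcal{M}'$ of the same cardinality with
$$
\sum_{m,n\in\mathcal{M}}\frac{(m,n)^{1/2}}{(mn)^{1/4}} \le \sum_{m,n\in\mathcal{M}'}\frac{(m,n)^{1/2}}{(mn)^{1/4}}\,2^{\omega(mn/(m,n)^2)}.
$$
This is the pivotal observation: the reduction to divisor-closed sets \emph{already carries} a $2^{\omega}$ weight. Applying the same reduction to the sum in Lemma \ref{lemma_h3} therefore produces a $4^{\omega(mn/(m,n)^2)}$-weighted GCD sum over a divisor-closed set. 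One then follows pages 99--100 of \cite{hilb} verbatim, replacing $2^{\omega(\cdot)}$ by $4^{\omega(\cdot)}$ throughout: a factor $4\sum_\nu p^{-\nu}$ becomes $8\sum_\nu p^{-\nu}$ (absorbed into the implied constant), $d(m)^{1+\varepsilon}$ becomes $d(m)^{2+\varepsilon}$ (since for squarefree $m$ one now has $4^{\omega(m)}=d(m)^2$), and the later exponent $\beta'$ is enlarged accordingly. Your second alternative has the right flavour --- divisor-closed extremal configurations and an Euler-product-type analysis --- but it never identifies that the divisor-closed reduction itself doubles the $\omega$-weight, and the claim that the local factor at $p$ grows by only $1+O(p^{-1/4})$ and that extremal configurations live on primes up to a power of $\log N$ are heuristics that would need to be justified from scratch, whereas the paper gets away with tracking a few exponents through an existing proof.
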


\begin{lemma} \label{lemma_diag}
Let $b_3$ be the constant from the statement of Lemma \ref{lemmak1k2}. There exist constants $b_5>0$ and $b_6>5$ such that the following holds. Let $S_{k_1}^{r_1}$ and $S_{k_2}^{r_2}$ be two sets as defined above, such that $k_1 \geq k_2$. Assume that
$$
|k_1 - k_2| \leq b_3 \log k_1 \qquad \textup{as well as} \qquad |r_1 - r_2| \leq b_3 \log k_1.
$$
Then the number of pairs of integers $m \in S_{k_1}^{r_1}$ and $n \in S_{k_2}^{r_2}$ for which both inequalities
\begin{equation} \label{both_1}
(m,n) \geq \frac{2^{k_1+1} 2^{-r_2}}{k_1^{b_3 + 1}}
\end{equation}
and
\begin{equation} \label{both_2}
\omega\left(\frac{mn}{(m,n)^2} \right) \geq b_5 \log k_1
\end{equation}
hold, is at most 
$$
2^{r_1} 2^{r_2} k_1^{-b_6},
$$
provided that $k_1$ and $k_2$ are sufficiently large.
\end{lemma}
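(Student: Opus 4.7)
The plan is to apply Lemma \ref{lemma_h3} directly to the set $\mathcal{M} := S_{k_1}^{r_1} \cup S_{k_2}^{r_2}$, extract a strong pointwise lower bound for the contribution of each ``bad'' pair (one satisfying both \eqref{both_1} and \eqref{both_2}) to the GCD sum, and then read off an upper bound on the number of bad pairs.

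First I would set up the bookkeeping. Since the sets $S_k^r$ are pairwise disjoint as $(k,r)$ ranges, the union $\mathcal{M}$ consists of distinct positive integers, and by \eqref{skr_size} the cardinality $N := \#\mathcal{M}$ satisfies $N \leq k_1(2^{r_1}+2^{r_2})$, so in particular $\log N \ll k_1$. Lemma \ref{lemma_h3} then provides the global bound
\[
\sum_{m,n \in \mathcal{M}} \frac{(m,n)^{1/2}}{(mn)^{1/4}}\, 2^{\omega(mn/(m,n)^2)} \;\ll\; k_1^{3/2 + b_4} \cdot 2^{(3/2)\max(r_1,r_2)}.
\]

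Next I would estimate the contribution of a single bad pair. For $m \in S_{k_1}^{r_1}$ and $n \in S_{k_2}^{r_2}$ one has $m \asymp 2^{k_1}$ and $n \asymp 2^{k_2}$, so combining this with the lower bound \eqref{both_1} on $(m,n)$ and the lower bound $2^{\omega(\cdot)} \geq k_1^{b_5 \log 2}$ coming from \eqref{both_2} yields
\[
\frac{(m,n)^{1/2}}{(mn)^{1/4}}\, 2^{\omega(mn/(m,n)^2)} \;\gg\; 2^{(k_1-k_2)/4 - r_2/2} \cdot k_1^{b_5 \log 2 - (b_3+1)/2}.
\]
Since every term in the GCD sum is non-negative, the number $A$ of bad pairs times this lower bound is at most the total GCD sum above. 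Rearranging, and using $|k_1 - k_2| \leq b_3 \log k_1$ and $|r_1-r_2| \leq b_3 \log k_1$ to absorb the residual power of $2$ into $2^{r_1+r_2} \cdot k_1^{b_3 \log 2}$ (handling the cases $r_1 \geq r_2$ and $r_2 \geq r_1$ separately, since \eqref{both_1} is not symmetric in $r_1, r_2$), I would arrive at an estimate of the shape
\[
A \;\ll\; 2^{r_1+r_2} \cdot k_1^{C_1 - b_5 \log 2},
\]
where $C_1 = b_3 \log 2 + (b_3+1)/2 + 3/2 + b_4$ depends only on $b_3$ and $b_4$. Choosing any $b_6 > 5$ and then taking $b_5$ large enough that $b_5 \log 2 > C_1 + b_6$ (absorbing the $\ll$-constant for sufficiently large $k_1, k_2$) gives the claim.

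The main obstacle I anticipate is the exponent bookkeeping in the power-of-$2$ factor: because $r_2$ and not $r_1$ appears in \eqref{both_1}, one must verify in both orderings of $r_1$ and $r_2$ that $2^{-r_2/2 + (3/2)\max(r_1,r_2)}$ exceeds $2^{r_1+r_2}$ only by a factor $k_1^{O(b_3)}$, so that the negative power of $k_1$ harvested from \eqref{both_2} can still dominate. Once this exponent arithmetic is in place, the rest is a direct and essentially mechanical application of Lemma \ref{lemma_h3}.
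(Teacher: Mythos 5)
Your proposal is correct and follows the same route as the paper's proof: apply Lemma \ref{lemma_h3} to $\mathcal{M} = S_{k_1}^{r_1}\cup S_{k_2}^{r_2}$, lower-bound the contribution of each bad pair to the GCD sum using \eqref{both_1} and \eqref{both_2}, and divide. Your bookkeeping of the power-of-two factor, with the explicit case split on $r_1 \gtrless r_2$, is in fact slightly cleaner than the paper's (which has a minor slip in tracking the $k_1^{b_3\log 2}$ factors, harmless because $b_5$ is chosen last).
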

The message of Lemma \ref{lemma_diag} is the following. Whenever $S_{k_1}^{r_1}$ and $S_{k_2}^{r_2}$ are such that $k_1$ and $k_2$ are close to each other, and such that $r_1$ and $r_2$ are also close to each other, then (in contrast to the situation of Lemma \ref{lemmak1k2}) we cannot rule out the possibility that there are many pairs $m \in S_{k_1}^{r_1}$ and $n \in S_{k_2}^{r_2}$ for which the overlaps $\mathcal{E}_m \cap \mathcal{E}_n$ are too large. However, even if there might exist many such pairs, then at least we can show that we might assume that such $m$ and $n$ have only few different prime factors. This allows you to obtain a more efficient estimate for the size of the overlaps, since the number of different prime factors enters the overlap estimate via the function $P(m,n)$ defined in \eqref{pmn}. On a quantitative level, note that in earlier work (such as \cite{a2, extra}) the number of different prime factors of $m$ and $n$ could only be estimated by $\ll (\log n)^{\varepsilon}$, whereas now we have the upper bound $\ll \log \log n$. This is where the gain in the ``extra divergence'' result comes from.\\

The remaining part of this paper is organized as follows. Section \ref{sec_lemma_proofs} contains the proofs of Lemmas \ref{lemma_h2} and \ref{lemmak1k2}, dealing with the overlap estimates in the case when at least one of $|k_1 - k_2|$ or $|r_1-r_2|$ is ``large''. Section \ref{sec_lemma_proofs_B} contains the proofs of Lemmas \ref{lemma_h3}--\ref{lemma_diag}, dealing with the overlap estimates in the case when both $|k_1-k_2|$ and $|r_1-r_2|$ are ``small''. Section \ref{sec_proof_th} contains the proof of Theorem \ref{th1}.

\section{Proof of Lemmas \ref{lemma_h2} and \ref{lemmak1k2}} \label{sec_lemma_proofs}

\begin{proof}[Proof of Lemma \ref{lemma_h2}]
We will use the fact that a GCD sum can be realized as an $L^2$-norm of a sum of dilated functions, a viewpoint which is also taken in e.g.\ \cite{a1,lind}. Let $f(x)$ be the function
$$
f(x) = \sum_{j \in \mathcal{J}} \frac{\sin(2 \pi j x)}{j^{1/4}},
$$
where $\mathcal{J}$ is the set
$$
\mathcal{J} := \{j \geq 1:~ 2^{k_1}/(R \log N^2) \leq j \leq 2^{k_1}/R \} \bigcup \left\{ j \geq 1:~ 2^{k_2}/(R \log N_2) \leq j \leq 2^{k_2}/R \right\}.
$$
Then by orthogonality
\begin{eqnarray*}
& & \int_0^1 \left(\sum_{m \in \mathcal{M}_1} f(mx) \right) \left(\sum_{n \in \mathcal{M}_2} f(nx) \right) ~dx \\
& = & \frac{1}{2} \sum_{\substack{j_1,j_2 \in \mathcal{J} \\ m \in \mathcal{M}_1, n \in \mathcal{M}_2,\\ j_1 m = j_2 n}} \frac{1}{j_1^{1/4} j_2^{1/4}}.
\end{eqnarray*}
The solutions of $j_1 m = j_2 n$ are of the form
\begin{equation} \label{h}
j_1 = h \frac{n}{(m,n)} , \qquad j_2 = h \frac{m}{(m,n)}, \qquad \textup{for some $h \in \mathbb{Z},~h \geq 1$}.
\end{equation}
By the construction of $\mathcal{J}$, for $h=1$ in \eqref{h} both numbers $j_1 = n/(m,n)$ and $j_2=m/(m,n)$ are contained in $\mathcal{J}$. Thus
\begin{equation} \label{gcd_int}
\int_0^1 \left(\sum_{m \in \mathcal{M}_1} f(mx) \right) \left(\sum_{n \in \mathcal{M}_2} f(nx) \right) ~dx \geq \frac{1}{2} \sum_{m \in \mathcal{M}_1, n \in \mathcal{M}_2} \frac{(m,n)^{1/2}}{(mn)^{1/4}}.
\end{equation}
On the other hand, by Cauchy-Schwarz,
\begin{eqnarray}
& & \left(\int_0^1 \left(\sum_{m \in \mathcal{M}_1} f(mx) \right) \left(\sum_{n \in \mathcal{M}_2} f(nx) \right) ~dx\right)^2 \\  \nonumber
& \leq & \left(\int_0^1 \left(\sum_{m \in \mathcal{M}_1} f(mx) \right)^2~dx \right) \left( \int_0^1 \left(\sum_{n \in \mathcal{M}_2} f(nx) \right)^2 ~dx\right). \label{second_int}
\end{eqnarray}
By expanding the square and using orthogonality we obtain
\begin{eqnarray*}
\int_0^1 \left(\sum_{m \in \mathcal{M}_1} f(mx) \right)^2~dx & = & \frac{1}{2} \sum_{\substack{j_1,j_2 \in \mathcal{J} \\ m,n \in \mathcal{M}_1,\\ j_1 m = j_2 n}} \frac{1}{j_1^{1/4} j_2^{1/4}}. 
\end{eqnarray*}
Again the solutions to $j_1 m = j_2 n$ are parametrized as in \eqref{h}, with one solution being $j_1 = m/(m,n)$ and $j_2 = n/(m,n)$, and the others being integer multiples. By the construction of the set $\mathcal{J}$, there can be at most $2(\log N_2)$ possible values in this parametrization. Thus
$$
\frac{1}{2} \sum_{\substack{j_1,j_2 \in \mathcal{J} \\ m,n \in \mathcal{M}_1,\\ j_1 m = j_2 n}} \frac{1}{j_1^{1/4} j_2^{1/4}} \leq 2 \log N_2 \sum_{m,n \in \mathcal{M}_1} \frac{(m,n)^{1/2}}{(mn)^{1/4}} \leq 2 N_1^{3/2} (\log N_1)^{b_1} (\log N_2),
$$
for sufficiently large $N_1$, as a consequence of Lemma \ref{lemma_h1}. Similarly, we estimate the second integral in \eqref{second_int}, and obtain
$$
\int_0^1 \left(\sum_{n \in \mathcal{M}_2} f(nx) \right)^2 ~dx \leq 2 N_2^{3/2} (\log N_2)^{b_1} (\log N_2)
$$
for sufficiently large $N_2$. Combining these estimates with \eqref{gcd_int} and \eqref{second_int} we obtain
$$
\frac{1}{2} \sum_{m \in \mathcal{M}_1, n \in \mathcal{M}_2} \frac{(m,n)^{1/2}}{(mn)^{1/4}} \leq 2 (N_1 N_2)^{3/4} (\log N_2)^{b_1+1}.
$$
This gives the conclusion of the lemma, if we choose for $b_2$ any number greater than $b_1+1$.
\end{proof}

\begin{proof}[Proof of Lemma \ref{lemmak1k2}]
Let $k_1,k_2,r_1,r_2$ be as in the statement of the lemma, and recall that we assumed $k_1 \geq k_2$. By \eqref{skr_lower} we have lower bounds on the cardinalities of $S_{k_1}^{r_1}$ and $S_{k_2}^{r_2}$, respectively. Recall how $D(m,n)$ was defined in \eqref{ddef}. By Lemma \eqref{lemmapv} and \eqref{pmn} for two sets $\mathcal{E}_m$ with $m \in S_{k_1}^{r_1}$, and $\mathcal{E}_n$ with $n \in S_{k_2}^{r_2}$, we have
\begin{equation} \label{empty_int}
\lambda (\mathcal{E}_m \cap \mathcal{E}_n) = \emptyset
\end{equation}
whenever
$$
D(m,n) < 1, 
$$
which necessarily happens whenever
$$
\frac{\max\{2^{k_1+1} 2^{-r_2},2^{k_2+1} 2^{-r_1} \}}{(m,n)} < 1,
$$
that is, whenever
$$
(m,n) > \max\{2^{k_1+1} 2^{-r_2},2^{k_2+1} 2^{-r_1} \}.
$$
Furthermore, as noted in \eqref{pmn}, we have
\begin{equation} \label{quasi-in}
\lambda (\mathcal{E}_m \cap \mathcal{E}_n) \ll \lambda (\mathcal{E}_m) \lambda(\mathcal{E}_n),
\end{equation}
unless 
$$
\frac{\max\{2^{k_1} 2^{-r_2},2^{k_2} 2^{-r_1} \}}{(m,n)} \leq k_1,
$$
which is equivalent to 
$$
(m,n) \geq \frac{\max\{2^{k_1} 2^{-r_2-1},2^{k_2} 2^{-r_1-1} \}}{k_1}.
$$
Thus the only critical case is when 
$$
\frac{1}{2} \max\{2^{k_1} 2^{-r_2},2^{k_2} 2^{-r_1} \} < (m,n) \leq 2 k_1 \max\{2^{k_1} 2^{-r_2},2^{k_2} 2^{-r_1} \}.
$$

We will split the proof of the lemma into two cases.\\

Case 1: 
\begin{equation} \label{add_ass}
|(k_1-r_2)-(k_2-r_1)| \geq \frac{b_3}{2} \log(k_1).
\end{equation}

Case 2: 
\begin{equation} \label{add_ass_2}
|(k_1-r_2)-(k_2-r_1)| \leq \frac{b_3}{2} \log(k_1).
\end{equation}

\begin{itemize}
\item Case 1: We assume that \eqref{add_ass} holds. Using Lemma \ref{lemma_h2} with $R = \frac{1}{2} \max\{2^{k_1} 2^{-r_2},2^{k_2} 2^{-r_1} \}$ we have
\begin{equation} \label{gcd_sum_case1}
\sum_{\substack{m \in S_{k_1}^{r_1},n \in S_{k_2}^{r_2},\\ R \leq (m,n) \leq R \log N_2}} \frac{(m,n)^{1/2}}{(mn)^{1/4}} \leq (\# S_{k_1}^{r_1} \cdot \# S_{k_2}^{r_2})^{3/4} (\log \max(\# S_{k_1}^{r_1}, \# S_{k_2}^{r_2})^{b_2}.
\end{equation}
Thus the number of pairs of indices $m \in S_{k_1}^{r_1}$ and $n \in S_{k_2}^{r_2}$ for which 
\begin{equation} \label{mnholds}
\frac{1}{2} \max\{2^{k_1} 2^{-r_2},2^{k_2} 2^{-r_1} \} > (m,n)
\end{equation}
is bounded by 
\begin{eqnarray}
& & \frac{(2^{k_1+1} 2^{k_2+1})^{1/4} \left(\# S_{k_1}^{r_1} \cdot \# S_{k_2}^{r_2} \right)^{3/4} (\log (\max(\# S_{k_1}^{r_1}, \# S_{k_2}^{r_2})^{b_2})}{ \left(\frac{1}{2} \max\{2^{k_1} 2^{-r_2},2^{k_2} 2^{-r_1} \} \right)^{1/2}} \nonumber\\
& \ll & \frac{\left(2^{k_1+k_2}\right)^{1/4} \left(2^{r_1+r_2} \right)^{3/4} ( \max\{r_1,r_2\})^{b_2} } { \left(\max\{2^{k_1} 2^{-r_2},2^{k_2} 2^{-r_1} \} \right)^{1/2}} \nonumber\\
& \ll & \frac{\left(2^{k_1-r_1+k_2-r_2}\right)^{1/4} 2^{r_1+r_2} k_1^{b_2} } { \left( \max\{2^{k_1} 2^{-r_2},2^{k_2} 2^{-r_1} \} \right)^{1/2}} \nonumber\\
& \ll & \left(2^{k_1-r_1+k_2-r_2 - 2 \max(k_1-r_2,k_2-r_1)}\right)^{1/4} 2^{r_1+r_2} k_1^{b_2} \nonumber\\
& = & \left(2^{\min(k_1-r_2,k_2-r_1) - \max(k_1-r_2,k_2-r_1)}\right)^{1/4} 2^{r_1+r_2} k_1^{b_2} \nonumber\\
& \ll &\left(2^{ - \frac{b_3}{2} \log k_1} \right)^{1/4} 2^{r_1+r_2} k_1^{b_2} \nonumber\\
& \ll & 2^{r_1+r_2} k_1^{-b_5}, \label{number_of_pairs}
\end{eqnarray}
where $b_5 = \frac{b_3 \log 2}{8} - b_2$ is a positive constant. Here we used \eqref{add_ass}, and the fact that we assumed $k_1 > k_2$.\\

Recall that, as a consequence of the lines following \eqref{empty_int}, we have $\mathcal{E}_m \cap \mathcal{E}_n = \emptyset$ whenever \eqref{mnholds} fails. Using \eqref{prod_ratio}, for all $m \in S_{k_1}^{r_1}$ and $n \in S_{k_2}^{r_2}$, we have
$$
\lambda(\mathcal{E}_m \cap \mathcal{E}_n) \ll (\log k_1) 2^{-r_1} 2^{-r_2}
$$
(recall that we assumed that $k_1 > k_2$, and that by construction $\log n \ll k_2$). The number of pairs of indices $m$ and $n$ that we have to take into account is estimated in \eqref{number_of_pairs}. Thus we obtain
\begin{eqnarray*}
\sum_{m \in S_{k_1}^{r_1}, n \in S_{k_2}^{r_2}} \lambda (\mathcal{E}_m \cap \mathcal{E}_n) & \ll & (\log k_1) 2^{-r_1} 2^{-r_2} 2^{r_1+r_2} k_1^{-b_5} \\
& \ll & (\log k_1) k_1^{-b_5} \\
& \ll & (\log k_1) k_1^{-b_5} r_1^2 \frac{2^{-r_1}}{r_1^2} 2^{r_1} r_2^2 \frac{2^{-r_2}}{r_2^2} 2^{r_2} \\ 
& \ll & (\log k_1) k_1^{-b_5} k_1^4~ \# S_{k_1}^{r_1} ~2^{-r_1}~ \# S_{k_2}^{r_2} ~2^{-r_2} \\
& \ll & k_1^{-b_6} \left( \sum_{m \in S_{k_1}^{r_1}} \lambda(\mathcal{E}_m) \right) \left( \sum_{n \in S_{k_2}^{r_2}} \lambda(\mathcal{E}_n) \right). 
\end{eqnarray*}
where we used \eqref{skr_lower}, as well as $r_1 \leq k_1$ and $r_2 \leq k_2 \leq k_1$, which is justified by the lines following \eqref{skr_def}. He $b_6$ is an appropriate \emph{positive} constant. The fact that $b_6$ can be chosen with a positive value follows from the assumption on the size of $b_3$ in the statement of the lemma, and the way how $b_6$ depends on $b_3$. This proves the conclusion of the lemma under the additional assumption 1. It remains to prove the lemma under the additional assumption 2.\\
 
\item Case 2: We assume that \eqref{add_ass_2} holds. By the assumption of the lemma we have either 
$$
|k_1 - k_2| \geq b_3 \log k_1 \qquad \textup{or} \qquad |r_1 - r_2| \geq b_3 \log k_1.
$$
Assuming that $k_1 - k_2| < b_3 \log k_1$ thus requires $|r_1 - r_2| \geq b_3 \log k_1$, which together with \eqref{add_ass_2} implies that
\begin{eqnarray*}
|k_1 - k_2| & = & |k_1 - r_2  + r_2 - k_2 - r_1 + r_1| \\
& \geq & |r_2  - r_1| - |(k_1 - r_2) - (k_2 - r_1)| \\
& \geq & \frac{b_3}{2} \log k_1.
\end{eqnarray*}
As a consequence, in Case 2 we always have
\begin{equation} \label{always_have}
|k_1 - k_2| \geq \frac{b_3}{2} \log k_1.
\end{equation}
In other words, in Case 2 the integers in the set $S_{k_1}^{r_1}$ are significantly larger than those in the set $S_{k_2}^{r_2}$, and we can use this fact to deduce the conclusion of the lemma. Indeed, assuming $m \in S_{k_1}^{r_1}$ and $n \in S_{k_2}^{r_2}$ and estimating 
$$
\frac{(m,n)^{1/2}}{(mn)^{1/4}} \leq \frac{n^{1/2}}{(mn)^{1/4}} \ll \frac{2^{k_2/2}}{2^{k_1/4 + k_2/4}},
$$
similar to \eqref{gcd_sum_case1} we obtain
\begin{eqnarray*}
& & \sum_{\substack{m \in S_{k_1}^{r_1},n \in S_{k_2}^{r_2},\\ R \leq (m,n) \leq R \log N_2}} \frac{(m,n)}{(mn)^{1/2}}  \\
& \ll & \frac{2^{k_2/2}}{2^{k_1/4 + k_2/4}} \sum_{\substack{m \in S_{k_1}^{r_1},n \in S_{k_2}^{r_2},\\ R \leq (m,n) \leq R \log N_2}} \frac{(m,n)^{1/2}}{(mn)^{1/4}} \\
& \ll & \frac{2^{k_2/2}}{2^{(k_1+k_2)/4}} 2^{3(r_1 + r_2)/4} k_1^{b_2}.
\end{eqnarray*}

Thus the number of pairs of indices $m \in S_{k_1}^{r_1}$ and $n \in S_{k_2}^{r_2}$ for which \eqref{mnholds} holds is bounded by 
\begin{eqnarray}
& \ll & \frac{2^{(k_1+k_2)/4} 2^{k_2/2} 2^{3(r_1 + r_2)/4} k_1^{b_2}}{ 2^{(k_1+k_2)/4} \left(\max\{2^{k_1} 2^{-r_2},2^{k_2} 2^{-r_1} \} \right)^{1/2}} \nonumber\\
& \ll & \frac{2^{k_2/2} 2^{3(r_1 + r_2)/4} k_1^{b_2}}{\left(2^{k_1} 2^{-r_2} 2^{k_2} 2^{-r_1}\right)^{1/4}} \nonumber\\
& \ll & 2^{r_1+r_2} 2^{(k_2-k_1)/4} k^{b_2} \nonumber\\
& \ll & 2^{r_1+r_2} k^{-b_5}, \nonumber
\end{eqnarray}
where as above $b_5 = \frac{b_3 \log 2}{b_2}$. Thus we have obtained an estimate similar to \eqref{number_of_pairs}, and thus the proof in Case 2 can be concluded in the same way as the proof in Case 1. Thus we have established Lemma \ref{lemmak1k2}.

\end{itemize}

\end{proof}

\section{Proofs of Lemmas \ref{lemma_h3} -- \ref{lemma_diag}} \label{sec_lemma_proofs_B}

\begin{proof}[Proof of Lemma \ref{lemma_h3}]
The proof of Lemma \ref{lemma_h3} can be given following the arguments in \cite{hilb}. The omega-function already appears there in Lemma 1, where the authors pass from the square-free case to the general situation, so in principle the ground is prepared for introducing the omega-function into the estimate for the GCD sum. To find the omega-function in the general GCD sum estimate as in our Lemma \ref{lemma_h1}, one has to prove a version of \cite[Lemma 1]{hilb} with the factor $2^{\omega(mn/(m,n)^2)}$ replaced by $4^{\omega(mn/(m,n)^2)}$. This requires only some minor modifications in the proof given in \cite{hilb}, so we just indicate what modifications are necessary there. Note that their result has a parameter $\alpha \in (0,1/2)$, to which we assign the special value $\alpha=1/4$.\\ 

Let $\mathcal{M}$ denote a set of distinct positive integers, and write $N = |\mathcal{M}|$. According to \cite[Lemma 1]{hilb} there exists a divisor-closed set $\mathcal{M}'$, also of cardinality $|\mathcal{M}'| = N$, such that
$$
\sum_{m,n \in \mathcal{M}} \frac{(m,n)^{1/2}}{(mn)^{1/4}} \leq \sum_{m,n \in \mathcal{M}'} \frac{(m,n)^{1/2}}{(mn)^{1/4}} 2^{\omega(mn/(m,n)^2)}.
$$
Here the term ``divisor-closed'' means that whenever a positive integer is contained in $\mathcal{M}'$, then all its divisors are contained in $\mathcal{M}'$ as well. Thus for the quantity that we want to estimate in Lemma \ref{lemma_h3}, we have
$$
\sum_{m,n \in \mathcal{M}} \frac{(m,n)^{1/2}}{(mn)^{1/4}} 2^{\omega(mn/(m,n)^2)} \leq \sum_{m,n \in \mathcal{M}'} \frac{(m,n)^{1/2}}{(mn)^{1/4}} 4^{\omega(mn/(m,n)^2)}.
$$

Thus let now $\mathcal{M}$ be any divisor-closed set of distinct positive integers such that $|\mathcal{M}|= N$. Our aim is to prove an upper bound for 
$$
\sum_{m,n \in \mathcal{M}} \frac{(m,n)^{1/2}}{(mn)^{1/4}} 4^{\omega(mn/(m,n)^2)}.
$$
We can follow the proof given on pages 99-100 of \cite{hilb} verbatim line by line, with a very few exceptions. The only necessary modifications are: everywhere in the proof, the terms $2^{\omega(mn/(m,n)^2)}$ and $2^{\omega(kl/(k,\ell)^2)}$ have to be replaced by $4^{\omega(mn/(m,n)^2)}$ and $4^{\omega(k \ell/(k,\ell)^2)}$, respectively. In line 9 from below on page 99 of \cite{hilb}, the term $4 \sum_{\nu=1}^\infty p^{-\nu}$ has to be changed to $8 \sum_{\nu=1}^\infty p^{-\nu}$. This does not lead to any change in the following line, since only the value of the implied constant in the ``$\ll$'' symbol is affected. In the last displayed formula on page 99, the term $d(m)^{1+\varepsilon}$ has to be changed to $d(m)^{2+\varepsilon}$, since for a square-free number $m$ instead of $2^{\omega(m)} = d(m)$ we now have to use $4^{\omega(m)} = d(m)^2$. On the first displayed formula of page 100 of \cite{hilb}, we consequently have to replace the term $d(m)^{\beta+2+4\varepsilon}$ by $d(m)^{\beta+4+4\varepsilon}$, which means that later in the proof we have to choose $\beta' > (\beta+4+4\varepsilon)/(2\alpha)$ rather than $\beta' > (\beta+2+4\varepsilon)/(2\alpha)$. Everything else remains completely unchanged, and we otain the same conclusion as in \cite{hilb}, only with a different exponent of the logarithmic term.

\end{proof}

\begin{proof}[Proof of Lemma \ref{lemma_diag}]
Let $k_1 \geq k_2$ be as in the statement of the lemma. We use Lemma \ref{lemma_h3} for the set $\mathcal{M} = S_{k_1}^{r_1} \cup S_{k_2}^{r_2}$. Recall that by \eqref{skr_size} we can assume that $\# S_{k_1}^{r_1} \leq k_1 2^{r_1}$ and $\# S_{k_2}^{r_2} \leq k_2 2^r_2 \leq k_1 2^r_2$. Applying Lemma \ref{lemma_h3}, and using that $|r_1 - r_2| \leq b_3 \log k_1$ implies $2^{r_1} \ll 2^{r_2} k^{b_3 \log 2}$, we obtain
\begin{eqnarray*}
\sum_{m,n \in \mathcal{M}} \frac{(m,n)^{1/2}}{(mn)^{1/4}} 2^{\omega(mn/(m,n)^2)} & \ll &  N^{3/2} (\log N)^{b_4} \\
& \ll & (k_1 2^{\max(r_1,r_2)})^{3/2} k_1^{b_4} \\
& = & 2^{3r_2/2} k_1^{b_4+3/2+(3b_3\log 2)/2}.
\end{eqnarray*}
Note that for $m,n \in \mathcal{M}$ the inequalities \eqref{both_1} and \eqref{both_2} require that
$$
(mn)^{1/4} \gg 2^{k_1/2}, \qquad (m,n) \gg \frac{2^{k_1} 2^{-r_2}}{k_1^{b_3 + 1}}, \qquad 2^{\omega(mn/(m,n)^2)} \gg k_1^{b_5 \log 2}.
$$
Thus for the number of pairs of indices $m,n \in \mathcal{M}$ for which both estimates \eqref{both_1} and \eqref{both_2} hold is at most
\begin{eqnarray*}
& \ll & 2^{3r_2/2} k_1^{b_4+3/2+(3b_3\log 2)/2} ~\frac{2^{k_1/2} 2^{r_2/2} k_1^{b_3/2 + 1/2}}{2^{k_1/2} k_1^{b_5 \log 2}} \\
& \ll & 2^{2r_2} k_1^{b_4+2+b_3/2+(3b_3\log 2)/2-b_5 \log 2} \\
& \ll & 2^{r_1} 2^{r_2} k_1^{b_4+2+b_3/2+(4b_3\log 2)/2-b_5 \log 2}.
\end{eqnarray*}
Let $b_6=-(b_4+2+b_3/2+(4b_3\log 2)/2-b_5 \log 2)$. If $b_5$ is chosen sufficiently large, then we can assume that $b_6>5$, and for the number of pairs satisfying \eqref{both_1} and \eqref{both_2} we have the upper bound $2^{r_1} 2^{r_2} k_1^{-b_6}$, as desired.
\end{proof}

\section{Proof of Theorem \ref{th1}} \label{sec_proof_th}

The proof of Theorem \ref{th1} follows the same path as the one given in \cite{a2}. The two new ingredients that we have are that
\begin{itemize}
 \item a) whenever $m$ and $n$ are such that either $m/n$ or $\psi(m)/\psi(n)$ are bounded away from 1, by a factor of order at least $\log \log n$, then we can control the size of the overlap $\mathcal{E}_m \cap \mathcal{E}_n$, and
 \item b) whenever $m$ and $n$ are such that $m/n$ and $\psi(m)/\psi(n)$ are both very close to 1, we may assume that $m$ and $n$ have at most $\ll \log \log n$ different prime factors.
\end{itemize}

\vspace{.5cm}
Throughout the proof, let $\varepsilon>0$ be fixed. Let the function $\psi: \mathbb{N} \mapsto [0,1/2]$ be given, and assume that \eqref{div} holds. We define the sets $S_k^r$ as in \eqref{skr_def}. As noted we may restrict $k$ and $r$ to the range $1 \leq r \leq k$. We may also assume throughout the proof that
$$
S_k^r \geq \frac{2^r}{r^2}, 
$$
since otherwise the indices in $S_k^r$ do not contribute to the divergence of the series \eqref{div}.\\

We split the positive integers into blocks of the form $\{2^{4^h}+1, 2^{4^{h+1}}\}$. As argued in \cite{extra}, we may assume that $\psi$ in supported only on integers that are contained such blocks for \emph{even} values of $h$. Also, again following \cite{extra}, we only need to control the overlaps of sets with indices $m$ and $n$ that are contained in the same block of this form; whenever $m$ and $n$ come from blocks with different values of $h$, then the corresponding sets are automatically quasi-independent.\\

We fix a positive integer $h$, and we assume that $h$ is ``large''. Let $S = S(h) = \lfloor \varepsilon \log h \rfloor$ . For every $n \in [2^{4^h}, 2^{4^{h+1}})$ and for every $s \in \{1, \dots, S\}$ we define sets $\mathcal{E}_n^{(s)}$ in a way similar to the definition of $\mathcal{E}_n$, but with $\psi(n)/e^s$ instead of $\psi(n)$. That is, 
\begin{equation} \label{ens}
\mathcal{E}_n^{(s)} = \bigcup_{\substack{1 \leq a \leq n,\\ (a,n)=1}} \left( \frac{a - \psi(n)}{n e^s},\frac{a+\psi(n)}{n e^s} \right).
\end{equation}
We emphasize that in all the estimates that follow, the implied constant in the symbol ``$\ll$'' does not depend on the value of $s \in \{1, \dots, S(h)\}$.\\ 

For every $s$ we have
\begin{eqnarray}
& & \sum_{2^{4^h} \leq m,n \leq 2^{4^{h+1}}} \lambda \left(\mathcal{E}_m^{(s)} \cap \mathcal{E}_n^{(s)} \right) \nonumber\\
& = & \sum_{1 \leq k_1,k_2 \leq K} \sum_{r_1 = 1}^{k_1} \sum_{r_2=1}^{k_2} \sum_{\substack{m \in S_{k_1}^{r_1}, ~n \in S_{k_2}^{r_2}}} \lambda \left(\mathcal{E}_m^{(s)} \cap \mathcal{E}_n^{(s)} \right). \label{every_s}
\end{eqnarray}
Lemma \ref{lemmak1k2} and Lemma \ref{lemma_diag} remain true without any change if the sets $\mathcal{E}_m$ and $\mathcal{E}_n$ are replaced by $\mathcal{E}_m^{(s)}$ and $\mathcal{E}_n^{(s)}$, respectively, since both lemmas only depend on the relative positions of $k_1$ and $k_2$ (which remain unchanged when passing from the sets $\mathcal{E}$ to the sets $\mathcal{E}^{(s)}$) and $r_1$ and $r_2$ (both of which are shifted in the same way when changing from $\mathcal{E}$ to $\mathcal{E}^{(s)}$). Thus by Lemma \ref{lemmak1k2} we have
\begin{equation} \label{we_have}
{\sum}^* ~\sum_{\substack{m \in S_{k_1}^{r_1},~n \in S_{k_2}^{r_2}}} \lambda \left(\mathcal{E}_m^{(s)} \cap \mathcal{E}_n^{(s)} \right) \leq \left(\sum_{2^{4^h} \leq n \leq 2^{4^{h+1}}} \lambda \left(\mathcal{E}_n^{(s)}\right) \right)^2.
\end{equation}
where the sum with the asterisk extends over all values of $k_1,k_2,r_1,r_2$ in the range $1 \leq k_1,k_2 \leq K, ~1 \leq r_1 \leq k_1, ~ 1\leq r_2 \leq k_2$ for which additionally either  
$$
|k_1-k_2| \geq b_3 \log k_1 \quad \text{ or } \quad |r_1-r_2| \geq b_3 \log k_1 \quad \text{holds}.
$$
So it remains to control the overlaps of sets $\mathcal{E}_m^{(s)}$ and $\mathcal{E}_n^{(s)}$, where $m \in S_{k_1}^{r_1}$ and $n \in S_{k_2}^{r_2}$,  in the case when both $|k_1 - k_2|$ and $|r_1-r_2|$ are small. Thus let $k_1,k_2,r_1,r_2$ be given, and assume that 
$$
|k_1-k_2| \geq b_3 \log k_1 \quad \text{ and } \quad |r_1-r_2| \geq b_3 \log k_1.
$$
Furtermore, w.l.o.g.\ we assume that $k_1 \geq k_2$. Let $m \in S_{k_1}^{r_1}$ and $n \in S_{k_2}^{r_2}$. As in the lines around \eqref{empty_int}, we have
$$
\mathcal{E}_m^{(s)} \cap \mathcal{E}_n^{(s)} = \emptyset, 
$$
whenever
\begin{equation} \label{whenever}
\max \left\{ \frac{n \psi(m)}{(m,n) e^s}, \frac{m \psi(n)}{(m,n) e^s} \right\} < 1.
\end{equation}
Note that 
$$
m \psi(n) \ll 2^{k_1} 2^{-r_2},
$$
and by the assumptions on $k_1,k_2,r_1,r_2$ we have
$$
n \psi(m) \ll 2^{k_2} 2^{-r_1} \ll 2^{k_1} 2^{-r_1} \ll 2^{k_1} 2^{-r_2} 2^{b_3 \log k_1} \ll 2^{k_1} 2^{-r_1} k_1^{b_3}.
$$
Furthermore, we have
$$
e^s \leq e^{S(h)} \ll e^{\varepsilon \log h} \ll e^{\varepsilon \log k_1} \ll k_1^\varepsilon
$$
for sufficiently large $h$. Thus for sufficiently large $h$ the inequality \eqref{whenever} is always satisfied when
\begin{equation} \label{when_sat}
(m,n) < \frac{2^{k_1+1} 2^{-r_2}}{k_1^{b_3 +1}}.
\end{equation}
Thus we have shown that, independent of the value of $s \in \{1, \dots, S(h)\}$, we have
$$
\mathcal{E}_m^{(s)} \cap \mathcal{E}_n^{(s)} = \emptyset
$$
whenever \eqref{when_sat} holds. In other words, it is sufficient to consider only those pairs of $m$ and $n$ for which \eqref{when_sat} fails. Thus for every $s$ we have
\begin{eqnarray}
\sum_{\substack{m \in S_{k_1}^{r_1},~n \in S_{k_2}^{r_2}}} \lambda \left(\mathcal{E}_m^{(s)} \cap \mathcal{E}_n^{(s)} \right) & = & {\sum}^{(1)} \lambda \left(\mathcal{E}_m^{(s)} \cap \mathcal{E}_n^{(s)} \right) + {\sum}^{(2)} \lambda \left(\mathcal{E}_m^{(s)} \cap \mathcal{E}_n^{(s)} \right). \label{every_s_b}
\end{eqnarray}
Here ${\sum}^{(1)}$ extends over those $m \in S_{k_1}^{r_1}$ and $n \in S_{k_2}^{r_2}$ for which
\begin{equation} \label{both_both}
(m,n) < \frac{2^{k_1+1} 2^{-r_2}}{k_1^{b_3 +1}} \quad  \text{ and } \quad \omega\left(\frac{mn}{(m,n)^2} \right) \geq b_5 \log k_1,
\end{equation}
while ${\sum}^{(2)}$ extends over those $m \in S_{k_1}^{r_1}$ and $n \in S_{k_2}^{r_2}$ for which
\begin{equation} \label{both_both_both}
(m,n) < \frac{2^{k_1+1} 2^{-r_2}}{k_1^{b_3 +1}} \quad  \text{ and } \quad \omega\left(\frac{mn}{(m,n)^2} \right) < b_5 \log k_1.
\end{equation}
By Lemma \ref{lemma_diag} the cardinality of the sets of pairs $m$ and $n$ with $m \in S_{k_1}^{r_1}$ and $n \in S_{k_2}^{r_2}$ for which both inequalities in \eqref{both_both} holds is of order at most $2^{r_1} 2^{r_2} k_1^{-b_6}$. Recall that 
$$
\lambda \left( \mathcal{E}_m^{(s)} \cap \mathcal{E}_n^{(s)} \right) \ll \lambda(\mathcal{E}_m^{(s)}) \lambda (\mathcal{E}_n^{(s)} ) \log k_1,
$$
as a consequence of \eqref{prod_ratio}. Thus we have
\begin{eqnarray}
{\sum}^{(1)} \lambda \left(\mathcal{E}_m^{(s)} \cap \mathcal{E}_n^{(s)} \right) & \ll & 2^{r_1} 2^{r_2} k_1^{-b_6} 2^{-r_1} 2^{-r_2}  e^{-2s} \log k_1 \nonumber\\
& \ll & 2^{r_1} 2^{r_2} e^{-2s} k_1^{-5} \nonumber\\
& \ll & |S_{k_1}^{r_1}| 2^{-r_1} |S_{k_2}^{r_2}| 2^{-r_2} e^{-2s} k^{-1} \nonumber\\
& \ll & \left( \sum_{m \in S_{k_1}^{r_1}} \lambda(\mathcal{E}_m^{(s)} \right) \left( \sum_{n \in S_{k_2}^{r_2}} \lambda(E_n^{(s)} \right). \label{sum1_desired}
\end{eqnarray}
Here we used \eqref{skr_lower} and the fact that $b_6 > 5$. Thus the contribution of ${\sum}^{(1)}$ is bounded in the desired way, and the remaining problem is to control ${\sum}^{(2)}$. This is where we apply the well-known averaging procedure from \cite{a2, extra}, together with the new ingredient that by \eqref{both_both_both} we only need to consider pairs $m$ and $n$ which have a limited number of distinct prime factors.\\

Let $m$ and $n$ be a pair of indices which contributes to the sum ${\sum}^{(1)}$. For $s \in \{1,\dots, S(h)\}$, we set 
$$
P_s (m,n) = 	\prod_{\substack{p \mid \frac{mn}{(m,n)^2},\\ p > e^s}} \left( 1 - \frac{1}{p} \right)^{-1}.
$$
We note that we can completely ignore the contribution of all primes $p > e^{S(h)}$, as a consequence of the assumption $\omega\left(\frac{mn}{(m,n)^2} \right) < b_5 \log k_1$ and Mertens' theorem. Indeed, we have
\begin{eqnarray*}
\log \left( \prod_{\substack{p \mid \frac{mn}{(m,n)^2},\\ p > e^{S(h)}}} \left( 1 - \frac{1}{p} \right)^{-1} \right)
& \ll & \sum_{\substack{p \mid \frac{mn}{(m,n)^2},\\ p > e^{S(h)}}} \frac{1}{p}  \\
& \ll & {\sum}^* \frac{1}{p},
\end{eqnarray*}
where the summation in ${\sum}^*$ extends over the $b_5 \log k_1$ smallest primes exceeding $e^{S(h)}$. For this sum we have
\begin{eqnarray*}
{\sum}^* \frac{1}{p} & \ll & (\log \log (e^{S(h)} + \log k_1)) - \log \log \log e^{S(h)} \\
& \ll & (\log \log \log h) - \log \log (\varepsilon \log h) \\
& \ll & 1,
\end{eqnarray*}
since $\varepsilon$ is assumed to be fixed. Thus the contribution of ```large'' primes can be ignored, which allows us to use a shorter summation range for the factors $P_s$ than in \cite{a2}. Following the lines in \cite{a2}, we can now prove that
$$
\sum_{s=1}^{S(h)} P_s(m,n) \ll S(h).
$$
Thus together with \eqref{every_s}, \eqref{we_have}, \eqref{every_s_b} and \eqref{sum1_desired} we have
\begin{eqnarray*}
& & \sum_{s=1}^{S(h)} \sum_{2^{4^h} \leq m,n \leq 2^{4^{h+1}}} \lambda \left(\mathcal{E}_m^{(s)} \cap \mathcal{E}_n^{(s)} \right) \nonumber\\
& \ll & S(h) \left(\sum_{2^{4^h} \leq n \leq 2^{4^{h+1}}} \left(\mathcal{E}_m^{(s)} \right) \right)^2.
\end{eqnarray*}
As a consequence, there is a choice of $s \in \{1, \dots, S(h)\}$ such that
$$
\sum_{2^{4^h} \leq m,n \leq 2^{4^{h+1}}} \lambda \left(\mathcal{E}_m^{(s)} \cap \mathcal{E}_n^{(s)} \right) \ll \left(\sum_{2^{4^h} \leq n \leq 2^{4^{h+1}}} \left(\mathcal{E}_m^{(s)} \right) \right)^2.
$$
With this choice of $s$, we replace the function $\psi(n)$ by $\psi^*(n) = \psi(n)/e^s$ for all $n$ in the range $2^{4^h} \leq m,n \leq 2^{4^{h+1}}$, and we write $\mathcal{E}_n^*$ for the corresponding sets which are defined as in \eqref{ens} with this choice of $s$. Note that by our choice of $S(h)$, we have
$$
\psi^*(n) \gg \frac{\psi(n)}{(\log \log n)^{\varepsilon}}. 
$$
Thus from \eqref{div} we have
$$
\sum_{n=1}^\infty \lambda(\mathcal{E}_n^*) = \sum_{n=1}^\infty \frac{2 \psi^*(n) n}{\varphi(n)} \gg \sum_{n=1}^\infty \frac{\psi(n) n}{\varphi(n) (\log \log n)^{\varepsilon}} = \infty.
$$
By construction the sets $\mathcal{E}_n^*$ are quasi-independent, and thus by Lemma \ref{lemmabc} the set of those $x$ which are contained in infinitely many sets $\mathcal{E}_n^*$ has positive measure. By Gallagher's \cite{gall} zero-one law, positive measure implies full measure. Thus almost all $x \in [0,1]$ are contained in infinitely many sets $\mathcal{E}_n^*$. Since $\mathcal{E}_n^* \subset \mathcal{E}_n$, almost all $x \in [0,1]$ are contained in infinitely many sets $\mathcal{E}_n$. This proves the theorem.

\section{Proofs of Theorem \ref{th2} and \ref{th3}}  \label{sec:proofth2th3}

As noted after the statement of theorems, Theorem \ref{th2} follows directly from Theorem \ref{th3}. Thus we only have to prove Theorem \ref{th3}. The proof can be given in the spirit of the one in \cite{aistslow}, using the decoupling lemmas in this paper to obtain the improved result.

\section*{Acknowledgements}

The author is supported by the Austrian Science Fund (FWF), projects F-5512, I-3466 and Y-901. Thanks to the members of the Fufu seminar: Laima Kaziulyte, Thomas Lachmann, Marc Munsch, Niclas Technau and Agamemnon Zafeiropoulos.


\end{document}